\begin{document}

\title[Equivalence classes of Augmentations and MCSs]{Equivalence Classes of Augmentations and Morse Complex Sequences of Legendrian Knots}

\author{Michael B. Henry}
\address{Siena College, Loudonville, NY 12211}
\email{mbhenry@siena.edu}

\author{Dan Rutherford}
\address{Ball State University, Muncie, IN 47306}
\email{rutherford@bsu.edu}

\begin{abstract}
Let $\Leg$ be a Legendrian knot in $\R^3$ with the standard contact structure.
In \cite{Henry2011}, a map was constructed from equivalence classes of Morse complex sequences for $\Leg$, which are combinatorial objects motivated by generating families, to homotopy classes of augmentations of the Legendrian contact homology algebra of $\Leg$.  Moreover, this map was shown to be a surjection.  We show that this correspondence is, in fact, a bijection.  As a corollary, homotopic augmentations determine the same graded normal ruling of $\Leg$ and have isomorphic linearized contact homology groups. A second corollary states that the count of equivalence classes of Morse complex sequences of a Legendrian knot is a Legendrian isotopy invariant.
\end{abstract}

\maketitle

\section{Introduction}
\mylabel{s:intro}

The symplectic techniques of holomorphic curves and generating families provide two effective classes of invariants of Legendrian knots in standard contact $\R^3$. The holomorphic curve approach, which in this low-dimensional setting takes on a combinatorial flavor, can be used to define a Differential Graded Algebra (DGA), known alternatively as the Legendrian contact homology DGA or the Chekanov-Eliashberg DGA and originally defined in \cite{Chekanov2002a} and \cite{Eliashberg2000}. Generating families of Legendrian submanifolds in $1$-jet spaces, including $\R^3$, have also been used to produce  homological Legendrian invariants; see, for instance, \cite{Jordan2006}, \cite{SabloffT2013}, \cite{Traynor1997}, \cite{Traynor2001}.  In addition to distinguishing Legendrian isotopy classes of knots, both the holomorphic and generating family invariants carry useful information about Lagrangian cobordisms, cf. \cite{EkholmHK}, \cite{SabloffT2013}. 

For Legendrian knots in $\R^3$, several close connections have been discovered between holomorphic curve and generating family invariants, although many questions remain.  For example, the existence of a linear at infinity generating family for a Legendrian knot is known to be equivalent to the existence of a certain DGA morphism, called an augmentation, from the Chekanov-Eliashberg DGA to its ground ring, (\cite{Chekanov2005}, \cite{Fuchs2003}, \cite{Fuchs2004}, \cite{Fuchs2008}, \cite{Sabloff2005}). However, it is unknown if this statement can be strengthened to a bijective correspondence between appropriate equivalence classes of generating families and augmentations. In this article, we approach this question using a discrete analog of a generating family called a Morse complex sequence, abbreviated MCS. MCSs have proven to be more tractable for explicit construction and computation; see, for example, \cite{Henry2011,Henry2013,Henry2014}.

A generating family for a Legendrian $\Leg \subset \R^3$ is a one-parameter family of functions whose critical values coincide with the projection of $\Leg$ to the $xz$-plane, also called the front diagram of $\Leg$.  A Morse complex sequence for $\Leg$ is a collection of chain complexes and formal handleslide marks on the front diagram of $\Leg$ that obey a set of restrictions identical to those satisfied by Morse complexes in a generic one-parameter family of functions.  Thus, by considering MCSs rather than generating families, a family of functions is replaced by discrete, algebraic data.  There is a natural  equivalence relation on the set of MCSs on $\Leg$ that reflects the generic bifurcations appearing in two-parameter families of Morse complexes. 

The concept of a Morse complex sequence originally appeared in unpublished work of Petya Pushkar, and first appears in print in the work of the first author \cite{Henry2011} where MCSs are studied in connection with augmentations.  In \cite{Henry2011}, a surjective map is defined from MCSs of $\Leg$ to augmentations of the Chekanov-Eliashberg DGA of $\Leg$. Moreover, equivalent MCSs are mapped to homotopic augmentations. In the present article, we complement the results of \cite{Henry2011} by showing in Lemma~\ref{l:equiv} that two MCSs mapped to homotopic augmentations must, in fact, be equivalent as MCSs. Combined with \cite{Henry2011} this gives the following.

\begin{theorem} \label{t:bijection}
For any Legendrian knot $\Leg \subset \R^3$ with generic front diagram, there is a bijection between equivalence classes of Morse complex sequences for $\Leg$ and homotopy classes of augmentations of the Chekanov-Eliashberg DGA of $\Leg$.
\end{theorem}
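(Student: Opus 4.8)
The plan is to obtain Theorem~\ref{t:bijection} by assembling the surjectivity statement of \cite{Henry2011} with Lemma~\ref{l:equiv}. Recall that \cite{Henry2011} constructs a map $\Psi$ carrying each Morse complex sequence $\MCS$ for $\Leg$ to an augmentation $\aug_\MCS := \Psi(\MCS)$ of the Chekanov--Eliashberg DGA of $\Leg$, proves that $\Psi$ is surjective, and proves that equivalent MCSs are carried to homotopic augmentations. The last fact means $\Psi$ descends to a surjective map $\overline\Psi$ from the set of MCS-equivalence classes for $\Leg$ onto the set of homotopy classes of augmentations. Consequently Theorem~\ref{t:bijection} is equivalent to the injectivity of $\overline\Psi$: if $\MCS$ and $\MCS'$ are MCSs with $\aug_\MCS$ homotopic to $\aug_{\MCS'}$, then $\MCS$ and $\MCS'$ are equivalent. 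This is exactly the assertion of Lemma~\ref{l:equiv}, so the entire content of the theorem is housed in that lemma, and I sketch how I would prove it.

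For Lemma~\ref{l:equiv} I would first reduce to MCSs in a standard form. One shows that every MCS for $\Leg$ is MCS-equivalent to one whose handleslide marks occupy prescribed configurations adjacent to the crossings and right cusps of the front diagram of $\Leg$, and that on such normal-form MCSs the map $\Psi$ becomes a \emph{bijection} onto the set of augmentations: the handleslide data of a normal-form MCS records precisely the augmentation values on the Reeb chords, and conversely each augmentation is realized by a unique normal-form MCS (this parallels constructions already present in \cite{Henry2011}). Granting this, injectivity of $\overline\Psi$ reduces to showing that two normal-form MCSs with homotopic augmentations are MCS-equivalent. Here I would decompose a DGA-homotopy between $\aug_\MCS$ and $\aug_{\MCS'}$ into a finite sequence of elementary homotopies --- each altering the augmentation in a single, controlled way --- and realize each elementary homotopy by an explicit finite path in the space of MCSs, built from the generic MCS bifurcations (births and deaths of handleslide pairs, a handleslide sliding through a crossing or cusp, two handleslides exchanging order, and the like). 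Concatenating these paths exhibits $\MCS$ and $\MCS'$ as equivalent.

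The step I expect to be the main obstacle is this last one: realizing an arbitrary augmentation homotopy by MCS bifurcations while checking that no spurious identifications are introduced --- i.e., that MCS-equivalence is exactly as coarse as algebraic homotopy, not strictly coarser. This demands (i) precise bookkeeping of how $\aug_\MCS$ transforms under each generic MCS bifurcation, so that the bifurcations are seen to generate a complete set of elementary homotopies, and (ii) a uniqueness/normalization argument showing that a homotopy of augmentations --- a priori only algebraic data respecting the word-length filtration of the DGA --- can always be chosen in a form compatible with the handleslide structure of the normal-form MCSs. Controlling the interaction of handleslide moves near cusps with those near crossings, tracking Maslov gradings, and verifying compatibility with the DGA differential throughout these paths is where the bulk of the technical work will reside.
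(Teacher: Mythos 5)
Your top-level reduction is exactly the paper's: surjectivity of the map from MCS classes to augmentation homotopy classes comes from \cite{Henry2011}, the normal form you posit is the A-form of Definition~\ref{defn:A-form} (on which the map to augmentations is indeed a bijection, Corollary 6.21 of \cite{Henry2011}), and the whole theorem collapses to the injectivity statement of Lemma~\ref{l:equiv}. So far there is nothing to object to, and you have correctly located where all the content lives.

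The gap is in your sketch of Lemma~\ref{l:equiv} itself, and it is precisely the step you flag as the main obstacle. Your plan is to decompose a chain homotopy $H$ into ``elementary homotopies'' and realize each one by MCS moves; but you give no candidate for what an elementary homotopy is, nor a generating set, and it is not clear such a decomposition exists in a useful form (a chain homotopy is determined by its values on degree $-1$ generators, but changing $H$ on a single generator does not in general connect $\aug$ to a nearby augmentation by a ``small'' move --- the effect on $\aug-\aug'=H\circ\df$ propagates through all disks with a corner at that generator). The paper does something different and more global: it takes the single chain homotopy $H$ as given, starts from the A-form MCS $\MCS$ for $\aug$, and sweeps an ordered collection of handleslides $V_r$ left-to-right across the entire front diagram. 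At each degree $-1$ crossing $p$ with $H(p)=1$ it applies MCS move (13) to inject new handleslides, and at every crossing and cusp it pushes $V_r$ through by the local MCS moves. The engine of the proof is an invariant (Property~1 in the paper): at every stage the handleslide $v_r^{i,j}$ is present if and only if the mod-2 count of $(\aug,\aug',H)$-half-disks originating at the vertical segment $\{x_r\}\times[i,j]$ is $1$, and the MCS agrees with $\MCS'$ to the left of $V_r$ and with $\MCS$ to its right. Maintaining this invariant through right cusps requires a separate computation (Lemma~\ref{l:aug-half-disks}) identifying the differentials of an A-form MCS with counts of $\aug$-half-disks. When a degree-$0$ crossing $q$ is reached, the handleslide $v_r^{k,k+1}$ sitting on the two crossing strands is, by the invariant together with Proposition~\ref{prop:admissible-disks}, nonzero exactly when $\aug$ and $\aug'$ differ at $q$, i.e.\ exactly when the A-form markings of $\MCS$ and $\MCS'$ differ there; cancelling it against the A-form handleslide converts the marking of $\MCS$ into that of $\MCS'$ as the sweep passes. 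This disk-counting invariant is the idea your proposal is missing; without it (or a worked-out substitute for your elementary-homotopy decomposition) the proof of Lemma~\ref{l:equiv}, and hence of Theorem~\ref{t:bijection}, is not complete.
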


As a consequence,  the number of MCS equivalence classes is a Legendrian isotopy invariant; see Corollary \ref{c:MCS-invt}.  The less immediate Corollary \ref{c:LCH} combines Theorem \ref{t:bijection} with previous work of the authors from \cite{Henry2013} to deduce that homotopic augmentations must have isomorphic linearized homology groups.  The set of linearized homology groups is a Legendrian isotopy invariant. Corollary \ref{c:LCH} allows for a refinement of this invariant by considering multiplicities. 

The remainder of the article is organized as follows.  Section \ref{s:background} recalls background concerning augmentations and Morse complex sequences, while Section \ref{s:results} contains the proof of Theorem \ref{t:bijection} and its corollaries.

\subsection{Acknowledgments}
We gratefully acknowledge The Royal Academy of Belgium where conversations between the authors on this project began in August 2013 at a workshop on Legendrian submanifolds, holomorphic curves, and generating families. We also thank the workshop organizer Fr\'{e}d\'{e}ric Bourgeois.  In addition, we thank the American Institute of Mathematics for supporting a SQuaRE research group on augmentations and related topics, and we thank our fellow SQuaRE participants Dmitry Fuchs, Paul Melvin, Josh Sabloff, and Lisa Traynor.    
\section{Background}
\mylabel{s:background}

A \textbf{Legendrian knot} in the standard contact structure on $\R^3$ is a smooth knot $\Leg : S^1 \to \R^3$ satisfying $\Leg'(t) \in \ker(dz-y\,dx)$ for all $t \in S^1$. A smooth one-parameter family $\Leg_t$, $0 \leq t \leq 1$, of Legendrian knots is a \textbf{Legendrian isotopy} between $\Leg_0$ and $\Leg_1$. The \textbf{front diagram} of $\Leg$ is the projection of $\Leg$ to the $xz$-plane. Every Legendrian knot is Legendrian isotopic, by an arbitrarily small Legendrian isotopy, to a Legendrian knot whose front diagram is embedded except at transverse self-intersections, called \textbf{crossings}, and semi-cubical cusps such that, in addition, all of these exceptional points have distinct $x$-coordinates. A Legendrian knot with such a front diagram is said to have a \textbf{$\sigma$-generic} front diagram; see, for example, the front diagram in Figure~\ref{f:knot}.  In a neighborhood of an $x$ value that is not the $x$-coordinate of a crossing or cusp, the front diagram looks like a collection of non-intersecting line segments commonly called the \textbf{strands} of $\front$ at $x$. Orient $\Leg$. The \textbf{rotation number} $r(\Leg)$ is $(d - u)/2$ where $d$ (resp. $u$) is the number of cusps at which the orientation travels downward (resp. upward) with respect to the $z$-axis.

\begin{figure}[t]
\centering
\includegraphics[scale=.9]{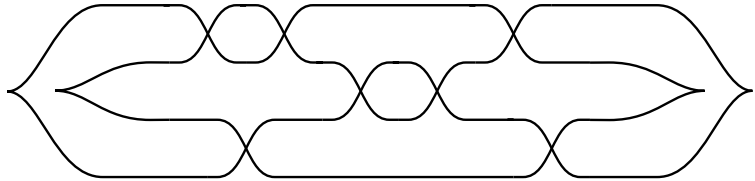}
\caption{A $\sigma$-generic front diagram of a Legendrian knot with rotation number $0$.}
\label{f:knot}
\end{figure}

\subsection{Chekanov-Eliashberg Algebra}

Fix a Legendrian knot $\Leg$ with $\sigma$-generic front diagram $\front$ and rotation number $0$. A \textbf{Maslov potential} is a map $\Maslov : \Leg \to \Z$ that is constant except at cusp points of $\Leg$ where the Maslov potential of the lower strand of the cusp is 1 less than the upper strand. Let $A(\front)$ be the $\Z / 2 \Z$ vector space generated by the labels $Q = \{q_1, \hdots, q_n\}$ assigned to the crossings and right cusps of $\front$. A generator $q \in Q$ is assigned a \textbf{grading} $|q|$, also called a \textbf{degree}, so that $|q|$ is 1 if $q$ is a right cusp and, otherwise, $|q|$ is $\Maslov(T) - \Maslov(B)$ where $T$ and $B$ are the strands of $\front$ crossing at $q$ and $T$ has smaller slope. The graded algebra $\alg(\front)$ is the unital tensor algebra $TA(\front)$.  The \textbf{Chekanov-Eliashberg algebra}, written $(\alg(\front), \df)$, is the algebra $\alg(\front)$ along with a degree -1 differential $\df : \alg(\front) \to \alg(\front)$ that, in the case of the front diagram description from \cite{Ng2003}, is defined by counting certain admissible maps of the two-disk $D^2$ into the $xz$-plane. We refer the reader to \cite{Ng2003} for a careful definition of $\df$ as we will need to investigate only a small subset of these maps. 

An \textbf{augmentation} is a map $\aug : \alg(\front) \to \Z / 2 \Z$ satisfying $\aug \circ \df = 0$, $\aug(1) = 1$, and $\aug(q) = 1$ only if $|q|=0$. The set $\mbox{Aug}(\front)$ is the set of all augmentations of $(\alg(\front), \df)$. We say a crossing $q$ is \textbf{augmented} by $\aug$ if $\aug(q) = 1$. An augmentation can be thought of as a morphism between the differential graded algebra $(\alg(\front), \df)$ and the differential graded algebra $(\Z / 2 \Z, \df')$ whose only non-zero element is in degree $0$ and where $\df' = 0$. From this perspective, there is a natural algebraic equivalence relation on $\mbox{Aug}(\front)$. Given $\aug$ and $\aug'$ in $\mbox{Aug}(\front)$, a \textbf{chain homotopy} from $\aug$ to $\aug'$ is a degree $1$ linear map $H : (\alg(\front), \df) \to (\Z / 2 \Z, \df')$ satisfying $\aug - \aug' =  \df' \circ H +H \circ \df $ and $H(ab) = H(a) \aug'(b) + (-1)^{|a|}\aug(a) H(b)$ for all $a, b \in \alg(\front)$. Since we are working over $\Z / 2 \Z$ and $\df'=0$, these conditions simplify to
\begin{equation}
\label{eq:chain-homotopy}
\aug - \aug' = H \circ \df \mbox{ and } H(ab) = H(a)\aug'(b) + \aug(a)H(b).
\end{equation}
By Lemma 2.18 of \cite{Kalman2005}, a chain homotopy $H$ is determined by the values it takes on the degree $-1$ crossings of $\front$. 

We say augmentations $\aug$ and $\aug'$ are \textbf{homotopic} and write $\aug \simeq \aug'$ if there exists a chain homotopy from $\aug$ to $\aug'$. As the notation implies and as is proven in \cite{YvesFelix1995}, chain homotopy provides an equivalence relation on the set $\mbox{Aug}(\front)$. We let $\mbox{Aug}^{ch}(\front)$ be $\mbox{Aug}(\front) / \simeq$. By Proposition 4.5 of \cite{Henry2011}, the count of homotopy classes of augmentations is a Legendrian isotopy invariant. 

Suppose $\aug$ and $\aug'$ are augmentations in $\mbox{Aug}(\front)$ and there exists a chain homotopy $H$ from $\aug$ to $\aug'$. Suppose $q$ is a degree 0 crossing and $\langle \df q, \prod_{i=1}^m q_{k_i} \rangle$ is $1$, where $\langle \df q, \prod_{i=1}^m q_{k_i} \rangle$ is the coefficient of $\prod_{i=1}^m q_{k_i}$ in $\df q$. Then, by Equation (\ref{eq:chain-homotopy}), 
\begin{eqnarray*}
(\aug - \aug')(q) &=& H \circ \partial (q) \\ \nonumber
&=& H \left (\prod_{i=1}^m q_{k_i} + \hdots \right ) \\ \nonumber
&=& H \left (\prod_{i=1}^m q_{k_i} \right ) + H( \hdots) \\ \nonumber 
&=& \sum_{j=1}^m \left [ \left ( \prod_{i=1}^{j-1} \aug(q_{k_i}) \right ) H(q_{k_j}) \left ( \prod_{i=j+1}^m \aug' (q_{k_i}) \right ) \right ] + H( \hdots). \\ \nonumber
\end{eqnarray*}
At most one term in the sum $$\sum_{j=1}^m \left [ \left ( \prod_{i=1}^{j-1} \aug(q_{k_i}) \right ) H(q_{k_j}) \left ( \prod_{i=j+1}^m \aug' (q_{k_i}) \right ) \right ]$$ may be non-zero, since $\aug$ and $\aug'$ are non-zero only on generators of degree 0 and $H$ is non-zero only on generators of degree $-1$. Note that, for a fixed $j \in \{1, \hdots, m\}$, the term $$ \left ( \prod_{i=1}^{j-1} \aug(q_{k_i}) \right ) H(q_{k_j}) \left ( \prod_{i=j+1}^m \aug' (q_{k_i}) \right ) $$ is non-zero if and only if $H(q_{k_j})=1$ holds and for $1 \leq i \leq j-1$ (resp. $j+1 \leq i \leq m$), the crossing $q_{k_i}$ is augmented by $\aug$ (resp. $\aug'$).

\begin{remark}
The monomials  $\prod_{i=1}^m q_{k_i}$ appearing in $\partial(q)$ correspond to certain mappings of the two-disk $D^2$ into the $xz$-plane that are immersions except for allowable exceptions along $\partial D^2$. Only monomials containing generators of degree $0$ or $-1$ are relevant for our purposes. Therefore, we present only the description of such disks in the following definitions.  Note that this restriction allows us to rule out some additional behaviors of $\partial D^2$ near right cusps that appear in \cite{Ng2003} and lead to monomials that contain generators of degree $1$.
\end{remark}

Let $D^2$ be the disk of radius 1 centered at the origin in $\R^2$. Choose $m$ points from $\df D^2 \setminus \{(1,0)\}$. Label the chosen points $\{b_1, \hdots, b_m\}$ counter-clockwise with $b_1$ the first point counter-clockwise from $(1,0)$. 

\begin{figure}[t]
\labellist
\small\hair 2pt
\pinlabel {(a)} [tl] at 28 78
\pinlabel {(b)} [tl] at 148 78
\pinlabel {(c)} [tl] at 271 44
\pinlabel {(d)} [tl] at 28 -2
\pinlabel {(e)} [tl] at 148 -2
\pinlabel {(f)} [tl] at 271 -2
\pinlabel {i} [tl] at 233 33
\pinlabel {j} [tl] at 233 6
\pinlabel {$\{x_0\} \times [i,j]$} [tl] at 268 20
\endlabellist
\centering
\includegraphics[scale=.9]{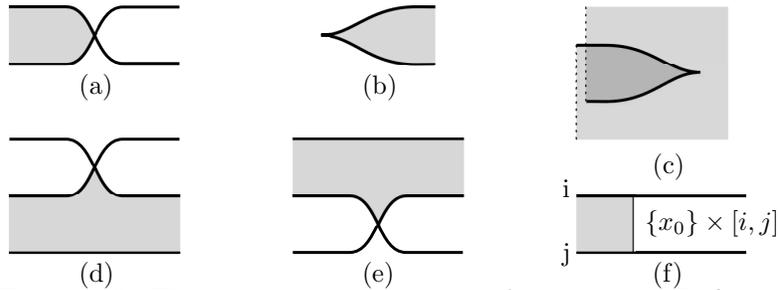}
\caption{The possible singularities of the disk in Definition~\ref{defn:admissible} and the half-disks in Definitions~\ref{defn:aug-half-disk} and \ref{defn:half-disk}. The crossings in (d) and (e) are called convex corners.  Near a boundary point that maps to a right cusp the image of a disk overlaps itself as indicated in (c) by the darkly shaded region.}
\label{f:admissible}
\end{figure}

\begin{definition}
\label{defn:admissible}
In terms of the notation above, a \textbf{$(0,-1)$-admissible disk} is a map from $D^2$ into the $xz$-plane that maps $\df D^2$ to the front diagram $\front$ and is a smooth orientation preserving immersion when restricted to the interior of $D^2$ satisfying the following:

\begin{enumerate}
\item The mapping takes $(1,0)$ to a degree 0 crossing $q$ and the image of $f$ in a neighborhood of $(1,0)$ looks as in Figure~\ref{f:admissible} (a). We say the $(0,-1)$-admissible disk \textbf{originates at $q$};
\item For exactly one $1 \leq j \leq m$, $f(b_j)$ is a degree $-1$ crossing $q_{k_j}$ and the image of $f$ in a neighborhood of $b_j$ looks as in Figure~\ref{f:admissible} (d) or (e).
\item For all $i \neq j$, $f(b_i)$ is a degree 0 crossing $q_{k_i}$ and the image of $f$ in a neighborhood of $b_i$ looks as in Figure~\ref{f:admissible} (d) or (e). 
\item Along $\partial D^2$ the mapping is smooth except at $\{b_1, \hdots, b_m\} \cup \{(1,0)\}$ as described in (1)-(3) and at points in $\df D^2 \setminus (\{b_1, \hdots, b_m\} \cup \{(1,0)\})$ where the image of $f$ looks like either Figure~\ref{f:admissible} (b) or (c).
\end{enumerate}

We say the $(0,-1)$-admissible disk has  \textbf{convex corners} at $q_{k_1}, \hdots, q_{k_m}$. The $(0, -1)$-admissible disk is assigned the monomial $\prod_{i=1}^m q_{k_i}$. We say a $(0,-1)$-admissible disk is an \textbf{$(\aug, \aug', H)$-admissible disk} if, for some $1 \leq j \leq m$, $H(q_{k_j})=1$ holds and for $1 \leq i \leq j-1$ (resp. $j+1 \leq i \leq m$), the crossing $q_{k_i}$ is augmented by $\aug$ (resp. $\aug'$); see Figure \ref{f:eHeDisk}.

Henceforth, we consider admissible disks up to orientation preserving reparametrization of the domain (fixing $\{b_1, \hdots, b_m\} \cup \{(1,0)\}$), and all counts of disks are up to this equivalence relation. 
\end{definition}

\begin{figure}[t]
\labellist
\small\hair 2pt
\pinlabel {$q$} [l] at 170 80
\pinlabel {$\aug$} [bl] at 152 133
\pinlabel {$\aug$} [b] at 109 161
\pinlabel {$\aug$} [b] at 62 161
\pinlabel {$H$} [b] at 15 133
\pinlabel {$\aug'$} [r] at -1 80
\pinlabel {$\aug'$} [tr] at 15 27
\pinlabel {$\aug'$} [t] at 58 -1
\pinlabel {$\aug'$} [t] at 107 -1
\pinlabel {$\aug'$} [tl] at 153 28
\endlabellist
\centering
\includegraphics[scale=.6]{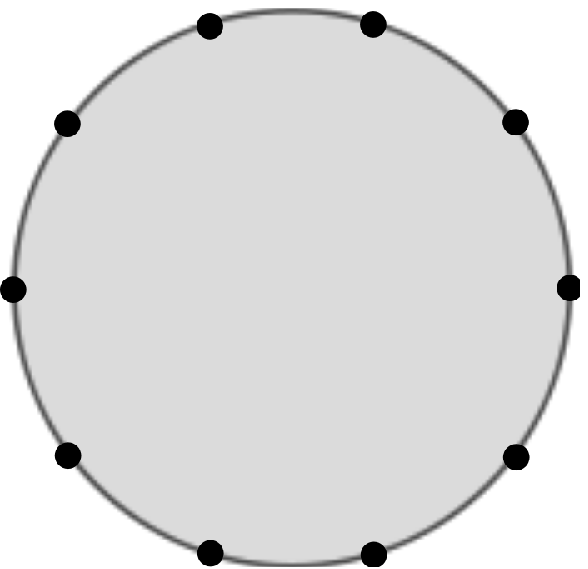}
\caption{The domain of an $(\aug, \aug', H)$-admissible disk with labels indicating marked points mapped to crossings augmented by $\aug$ and $\aug'$ and the marked point mapped to the crossing satisfying $H(q_{k_j})=1$.}
\label{f:eHeDisk}
\end{figure}

The restrictions on the types of non-smooth points of an $(0,-1)$-admissible disk imply that $q$ is the right-most point of the disk. From Section 2 of \cite{Ng2003}, when a single $q_{k_j}$ has degree $1$ while $q$ and all of the remaining $q_{k_i}$ have degree $0$, $\langle \df q, \prod_{i=1}^m q_{k_i} \rangle = 1$ holds if and only if there are an odd number of $(0,-1)$-admissible disks originating at $q$ and with monomial  $\prod_{i=1}^m q_{k_i}$. Proposition~\ref{prop:admissible-disks} follows directly from the discussion above. 

\begin{proposition}
\label{prop:admissible-disks}
Suppose $\front$ is a $\sigma$-generic front diagram of a Legendrian knot and $\aug$ and $\aug'$ are augmentations in $\mbox{Aug}(\front)$. If $q$ is a degree $0$ crossing and $H$ is a chain homotopy from $\aug$ to $\aug'$, then $\aug$ and $\aug'$ differ at $q$ if and only if there are an odd number of $(\aug, \aug', H)$-admissible disks originating at $q$. 
\end{proposition}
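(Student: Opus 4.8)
The plan is to assemble Proposition~\ref{prop:admissible-disks} directly from the computation carried out just above it, together with the combinatorial description of the Chekanov--Eliashberg differential from \cite{Ng2003}. The starting point is the chain homotopy relation $(\aug-\aug')(q) = H(\partial q)$ supplied by Equation~\eqref{eq:chain-homotopy}. Writing $\partial q = \sum_w \langle \partial q, w \rangle\, w$ as a $\Z/2\Z$-linear combination of words $w = \prod_{i=1}^m q_{k_i}$ in the generators, linearity of $H$ gives $(\aug-\aug')(q) = \sum_w \langle \partial q, w\rangle\, H(w)$, so it suffices to understand which words $w$ satisfy $H(w) = 1$ and to relate $\langle \partial q, w\rangle$ to a count of disks.

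For the first task I would iterate the Leibniz rule $H(ab) = H(a)\aug'(b) + \aug(a)H(b)$: expanding $H(w)$ produces a sum over positions $j \in \{1,\dots,m\}$ of the term $\big(\prod_{i<j}\aug(q_{k_i})\big)\,H(q_{k_j})\,\big(\prod_{i>j}\aug'(q_{k_i})\big)$. Since $\aug$ and $\aug'$ vanish off degree $0$ and $H$ vanishes off degree $-1$ (Lemma~2.18 of \cite{Kalman2005}), a nonzero contribution forces every generator of $w$ except the $j$-th to have degree $0$ and the $j$-th to have degree $-1$; such a $j$, if it exists, is unique. Thus $H(w) = 1$ precisely when $w$ has a single degree $-1$ generator $q_{k_j}$ with $H(q_{k_j}) = 1$, the generators preceding it are augmented by $\aug$, and those following it are augmented by $\aug'$; in other words, precisely when a $(0,-1)$-admissible disk with monomial $w$ is an $(\aug,\aug',H)$-admissible disk in the sense of Definition~\ref{defn:admissible}. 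In particular the only words contributing to $(\aug-\aug')(q)$ are those arising as monomials of $(0,-1)$-admissible disks, which is why, as observed in the Remark preceding Definition~\ref{defn:admissible}, the degree $1$ right-cusp behaviors of \cite{Ng2003} are irrelevant here. For the second task I would invoke the formula recalled from Section~2 of \cite{Ng2003}: for such a word $w$ (one degree $-1$ generator, the rest of degree $0$), $\langle \partial q, w\rangle$ equals, mod $2$, the number of $(0,-1)$-admissible disks originating at $q$ with monomial $w$.

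Combining the two, $(\aug-\aug')(q) = \sum_{w\,:\,H(w)=1} \langle \partial q, w\rangle$ equals, mod $2$, the number of $(0,-1)$-admissible disks originating at $q$ whose monomial satisfies $H(w)=1$, that is, the number of $(\aug,\aug',H)$-admissible disks originating at $q$; hence $\aug$ and $\aug'$ differ at $q$ if and only if this count is odd, as claimed. The only point requiring care is the bookkeeping in the middle step: ``the monomial of a disk'' must be read as an \emph{ordered} word in the noncommutative tensor algebra $\alg(\front)$, with the counterclockwise order of the boundary marked points $b_1,\dots,b_m$ relative to the base point $(1,0)$ matching the left-to-right order of the letters of $w$, so that whether a given $(0,-1)$-admissible disk is $(\aug,\aug',H)$-admissible is entirely determined by the single scalar $H(w)$. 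With this convention in place the proposition is, as the text indicates, an immediate consequence of the preceding discussion.
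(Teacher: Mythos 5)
Your argument is correct and is essentially identical to the paper's: the paper derives the proposition directly from the preceding discussion, namely the expansion $(\aug-\aug')(q)=H(\partial q)$, the Leibniz-rule analysis showing at most one term per monomial survives (precisely when the unique degree $-1$ corner is sent to $1$ by $H$ and the corners before/after it are augmented by $\aug$/$\aug'$), and Ng's mod-$2$ identification of $\langle \partial q, w\rangle$ with the count of $(0,-1)$-admissible disks with monomial $w$. Your added remark about reading monomials as ordered words matching the counterclockwise boundary order is exactly the convention the paper uses implicitly.
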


\subsection{Morse Complex Sequences}

We briefly sketch the connection between generating families and Morse complex sequences and refer the reader to \cite{Henry2013} for more details. A one-parameter family of smooth functions $f_x: \R^N \to \R$, parameterized by $x \in \R$, is a generating family for a Legendrian knot $\Leg$ with front diagram $\front$ if $$\front = \left \{ (x, z): z = f_x(e) \mbox{ for some } e \in \R^n \mbox{ satisfying } \frac{\df f_x}{\df e}(e) = 0 \right \}.$$ With an appropriately chosen metric, a generic $x \in \R$ determines a Morse chain complex $(C_x, d_x)$ on $\R^N$ and, as $x$ varies, the evolution of the Morse complexes of $f_x$ are well-understood; a cusp of $\front$ corresponds to the creation or elimination of a canceling pair of critical points and a crossing corresponds to two critical points exchanging critical values. As $x$ varies, it is also possible for a fiberwise gradient flowline to momentarily flow between two critical points of the same index. Such an occurrence is called a handleslide and it determines an explicit chain isomorphism between successive Morse complexes. In summary, a generating family and choice of metric determine a one-parameter family of Morse chain complexes and the relationship between successive chain complexes is determined by the crossings and cusps of $\front$ and the handleslides. A Morse complex sequence on $\front$ is a finite sequence of chain complexes $(C_m, d_m)$ and vertical marks on $\front$ that are meant to correspond to the Morse chain complexes and handleslides of a generating family and choice of metric. In addition, varying the choice of metric motivates an equivalence relation on MCSs. 

Fix a Legendrian knot $\Leg$ with $\sigma$-generic front diagram $\front$, rotation number $0$, and Maslov potential $\Maslov$. Theorem~\ref{t:bijection} proves that a certain surjective map in \cite{Henry2011} from equivalence classes of Morse complex sequences to $\mbox{Aug}^{ch}(\front)$ is, in fact, a bijection. The definition of an MCS given in \cite{Henry2011} defines an MCS independent of a fixed front diagram. It is then shown that an MCS determines a front diagram. An alternative definition of an MCS given in \cite{Henry2013} defines an MCS as an object assigned to a fixed front diagram. Both definitions determine the same set of objects on a fixed front diagram. We will use the definition of a Morse complex sequence given in \cite{Henry2013}.  

A \textbf{handleslide} on $\front$ is a vertical line segment disjoint from all crossings and cusps and with endpoints on strands of $\front$ that have the same Maslov potential.

\begin{definition}
\label{defn:MCS}
A \textbf{Morse complex sequence} on a $\sigma$-generic front diagram $\front$ is the triple $\MCS = ( \{(C_m, d_m)\}, \{x_m\}, H)$ satisfying:

\begin{enumerate}
\item $H$ is a set of handleslides on $\front$.
\item The real values $x_1 < x_2 < \hdots < x_M$ are $x$-coordinates distinct from the $x$-coordinates of crossings and cusps of $\front$ and handleslides of $H$. For each $1 \leq m < M$, the set $\{(x, z): x_m \leq x \leq x_{m+1}\}$ contains a single crossing, cusp or handleslide. The set $\{(x, z): -\infty < x \leq x_1\}$ contains the left-most left cusp and the set $\{(x,z): x_M \leq x < \infty\}$ contains the right-most right cusp.  
\item For each $1 \leq m \leq M$, the points of intersection of the vertical line $\{x_m\} \times \R$ and $\front$ are labeled $e_1, e_2, \hdots, e_{s_m}$ from top to bottom. The vector space $C_m$ is the $\Z$-graded $\Z / 2 \Z$ vector space generated by $e_1, e_2, \hdots, e_{s_m}$, where the degree of each generator is the value of the Maslov potential on the corresponding strand of $\front$, $|e_i| = \Maslov(e_i)$. The map $d_m : C_m \to C_m$ is a degree $-1$ differential that is triangular in the sense that $$d_m e_i = \sum_{i<j} c_{ij} e_j, \mbox{ } c_{ij} \in \Z / 2 \Z;$$
\item The coefficients $\langle d_1 e_1, e_2 \rangle$ and $\langle d_M e_1, e_2 \rangle$ are both $1$. Suppose $1 \leq m < M$ and let $T$ be the tangle $\front \cap \{ (x, z) : x_m \leq x \leq x_{m+1}\}$. If $T$ contains a left (resp. right) cusp between strands $k$ and $k+1$, then $\langle d_{m+1} e_k, e_{k+1} \rangle $ is $1$ (resp. $\langle d_m e_k, e_{k+1} \rangle$ is $1$). If $T$ contains a crossing between strands $k$ and $k+1$, then $\langle d_m e_k, e_{k+1}\rangle$ is $0$. 
\item For $1 \leq m < M$, the crossing, cusp, or handleslide mark in the tangle $T = \front \cap \{ (x,z) : x_m \leq x \leq x_{m+1} \}$ determines an algebraic relationship between the chain complexes $(C_m, d_m)$ and $(C_{m+1},d_{m+1})$ as follows:

\begin{enumerate}
\item \textbf{Crossing:} If the crossing is between strands $k$ and $k+1$, then the map $\phi : (C_m, d_m) \to (C_{m+1}, d_{m+1})$ defined by:
\begin{equation*}
\phi(e_i) = \left\{ \begin{array}{rl}
 e_i &\mbox{ if $i \notin \{k, k+1\}$} \\
 e_{k+1} &\mbox{ if $i = k$} \\
 e_{k} &\mbox{ if $i = k+1$} 
       \end{array} \right.
\end{equation*}
is an isomorphism of chain complexes. 

\item \textbf{Right cusp:} If the right cusp is between strands $k$ and $k+1$, then the linear map
\begin{equation*}
	\phi ( e_i) =
	\begin{cases}
  [e_i] &\mbox{ if $i < k $} \\
  [e_{i+2}] &\mbox{ if $i \geq k $}.
  \end{cases}
\end{equation*}
is an isomorphism of chain complexes from $(C_{m+1},d_{m+1})$ to the quotient of $(C_m, d_m)$ by the acyclic subcomplex generated by $\{e_k, d_{m}e_k\}$. 
\item \textbf{Left cusp:} The case of a left cusp is the same as the case of a right cusp, though the roles of $(C_{m}, d_m)$ and $(C_{m+1}, d_{m+1})$ are reversed.
\item \textbf{Handleslide:} If the handleslide mark has endpoints on strands $k$ and $l$ with $k < l$, then the map $h_{k,l}:(C_m, d_m) \to (C_{m+1}, d_{m+1})$ defined by 
\begin{equation*}
	h_{k,l} ( e_i) =
	\begin{cases}
  e_i &\mbox{ if $i \neq k $} \\
  e_k + e_l &\mbox{ if $i=k$}.
  \end{cases}
\end{equation*}
is an isomorphism of chain complexes.

\end{enumerate}

\end{enumerate}

\end{definition}

The set $\mbox{MCS}(\front)$ is the set of all Morse complex sequences on $\front$. 

\begin{remark}
Morse complex sequences may be defined over more general coefficient rings than $\Z/2$, cf. \cite{Henry2014}.  We restrict attention to $\Z/2$ coefficients as this is also done in \cite{Henry2011}.   
\end{remark}

\begin{definition}
\label{defn:simple}
An MCS $\MCS = ( \{(C_m, d_m)\}, \{x_m\}, H)$ in $\mbox{MCS}(\front)$ has \textbf{simple left cusps} if, for each tangle $T = \{ (x,z) : x_m \leq x \leq x_{m+1} \}$ containing a left cusp between strands $k$ and $k+1$, the chain complex $(C_{m+1}, d_{m+1})$ satisfies $\langle d_{m+1} e_k, e_i \rangle = \langle d_{m+1} e_{k+1}, e_i \rangle = 0$ 
for all $k+1 < i$ and $\langle d_{m+1} e_j, e_{k+1} \rangle = \langle d_{m+1} e_j, e_{k} \rangle = 0$ for all $j < k$. 
\end{definition}

The subset $\mbox{MCS}_b(\front) \subset \mbox{MCS}(\front)$ denotes the set of MCSs with simple left cusps. We use the letter $b$ to be consistent with the notation of \cite{Henry2011}, where a left cusp is also called a ``birth''. This language is meant to draw a connection to the creation of a canceling pair of critical points, often called a birth, in a one-parameter family of Morse functions on a manifold. 

Given an MCS $\MCS = ( \{(C_m, d_m)\}, \{x_m\}, H)$ with simple left cusps, the chain complexes $\{(C_m, d_m)\}$ are uniquely determined by the crossings and cusps of $\front$, the handleslides $H$, and requirements (5) (a)-(d) of Definition~\ref{defn:MCS}. Consequently, $\MCS$ may be represented visually by placing the handleslide marks $H$ on the front diagram $\front$; see Figure~\ref{f:MCS-example}.

\begin{figure}[t]
\centering
\includegraphics[scale=.9]{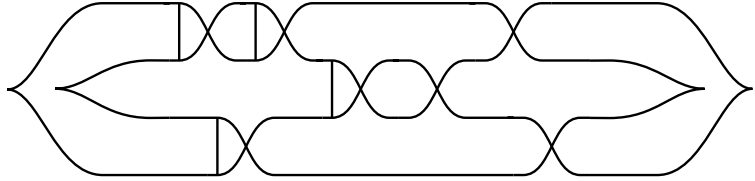}
\caption{An MCS with simple left cusps. This MCS is also in A-form.}
\label{f:MCS-example}
\end{figure}

\begin{figure}[t]
\labellist
\small\hair 2pt
\pinlabel {(1)} [tl] at 69 403
\pinlabel {(2)} [tl] at 261 403
\pinlabel {(3)} [tl] at 69 324
\pinlabel {(4)} [tl] at 261 324
\pinlabel {(5)} [tl] at 69 251
\pinlabel {(6)} [tl] at 261 251
\pinlabel {(7)} [tl] at 69 172
\pinlabel {(8)} [tl] at 261 172
\pinlabel {(9)} [tl] at 70 101
\pinlabel {(10)} [tl] at 258 101
\pinlabel {(11)} [tl] at 65 20
\pinlabel {(12)} [tl] at 258 20
\endlabellist
\centering
\includegraphics[scale=.9]{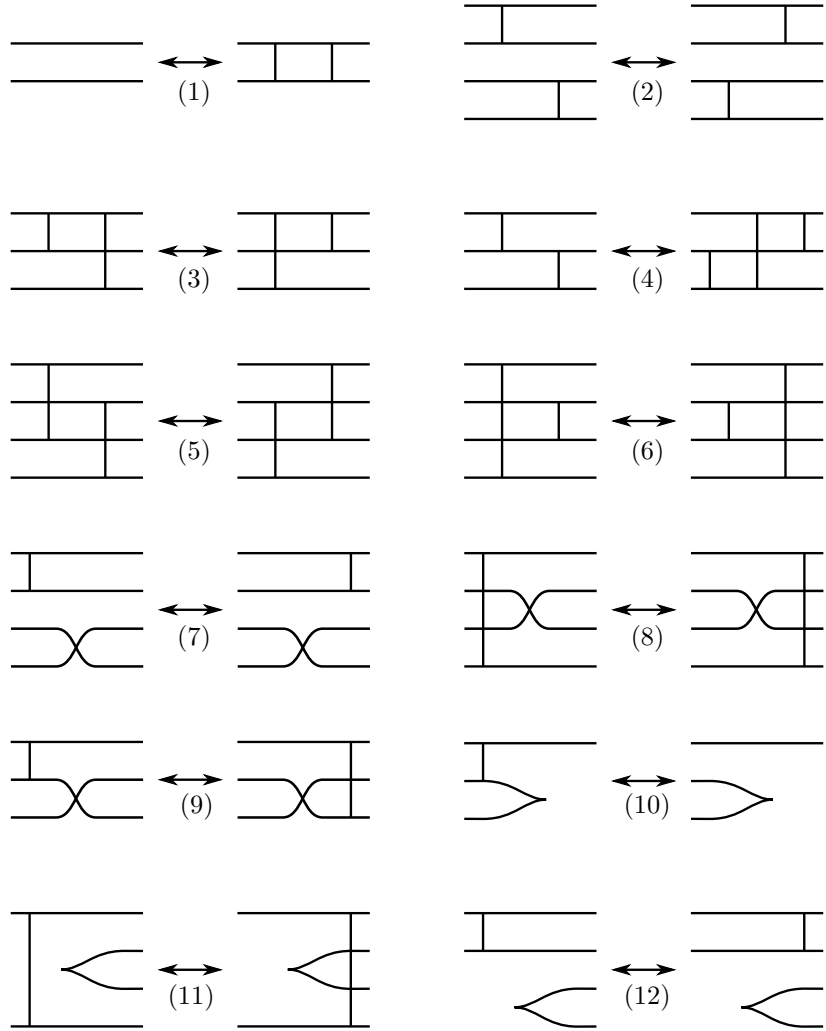}
\caption{Handleslide modifications, called MCS moves, that result in an equivalent MCS.}
\label{f:moves}
\end{figure}

\begin{figure}[t]
\labellist
\small\hair 2pt
\pinlabel {$i$} [tr] at -1 89
\pinlabel {$k$} [tr] at -1 57
\pinlabel {$l$} [tr] at -1 40
\pinlabel {$j$} [tr] at -1 9
\endlabellist
\centering
\includegraphics[scale=1.2]{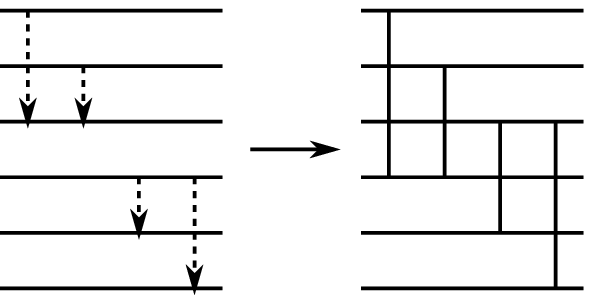}
\caption{MCS move (13). On the left, a dotted arrow from strand $\alpha$ to strand $\beta$ indicates that $\langle d_m e_{\alpha}, e_{\beta} \rangle$ is $1$.}
\label{f:explosion}
\end{figure}

In \cite{Henry2011}  an equivalence relation on the set $\mbox{MCS}(\front)$ is defined that is motivated by a corresponding equivalence for generating families.  (See also \cite{Henry2013}.)  Here we denote the set of equivalence classes of this relation by  $\widehat{\mbox{MCS}}(\front) = \mbox{MCS}(\front) / \simeq$.   We recall a version of this equivalence relation that applies to the more restricted set of MCSs with simple left cusps, $\mbox{MCS}_b(\front)$.  We denote equivalence classes with respect to this relation by $\widehat{\mbox{MCS}}_b(\front)$.  By Proposition 3.17 of \cite{Henry2011}, the map from $\widehat{\mbox{MCS}}_b(\front)$ to $\widehat{\mbox{MCS}}(\front)$ induced by the inclusion  $\mbox{MCS}_b(\front) \subset \mbox{MCS}(\front)$ is a bijection.  Therefore, to prove Theorem~\ref{t:bijection}, we need only consider MCSs in $\mbox{MCS}_b(\front)$ and MCS classes in $\widehat{\mbox{MCS}}_b(\front)$. 

The equivalence relation on $\mbox{MCS}_b(\front)$ is generated by the \textbf{MCS moves} pictured in Figures~\ref{f:moves} and \ref{f:explosion}.  The numbering indicated will be used throughout this article. 
Additional moves result from reflecting each of the two figures in (3),(7),(9),(10), and (12) of Figure~\ref{f:moves} about a horizontal axis and reflecting each of the two figures in (4), (9), (11), and (12) of Figure~\ref{f:moves} about a vertical axis. The handleslide modification that results from reflecting Figure~\ref{f:moves} (10) about a vertical axis is \emph{not} an MCS move for MCSs with simple left cusps.  (The absence of this reflected move is the only difference between the definitions of the equivalence relations on $\mbox{MCS}_b(\front)$ and  $\mbox{MCS}(\front)$ discussed in the previous paragraph.) MCS move (13) requires explanation. Suppose $\MCS = ( \{(C_m, d_m)\}, \{x_m\}, H)$ is an MCS on $\front$ and suppose there exists $x_m$ and $1 \leq k < l \leq s_m$ so that $\Maslov(e_k) = \Maslov(e_{l})-1$, then MCS move (13) introduces the collection of handleslides $K$ defined as follows.  The handleslides in $K$ are of two types. First, if $i < k$ and $\langle d_m e_i, e_k \rangle=1$ holds, then $K$ contains a handleslide with endpoints on $i$ and $l$. Second, if $l<j$ and $\langle d_m e_{l}, e_j \rangle=1$ holds, then $K$ contains a handleslide with endpoints on $k$ and $j$.

By Proposition 3.8 of \cite{Henry2011}, modifying the handleslide set of an MCS in $\mbox{MCS}_b(\front)$ as in one of the cases in Figures~\ref{f:moves} and \ref{f:explosion} results in another MCS in $\mbox{MCS}_b(\front)$. Therefore, the notion of equivalence in the following definition is well-defined. In addition, if an MCS move is applied to an MCS, then only those chain complexes near the location of the MCS move are affected. In other words, the MCS moves are local in the sense that they change both the handleslides and chain complexes of an MCS only in a local neighborhood.

\begin{definition}
\label{defn:equiv}
Two MCSs $\MCS$ and $\MCS'$ in $\mbox{MCS}_b(\front)$ are \textbf{equivalent}, written $\MCS \simeq \MCS'$, if there exists a sequence $\MCS_1, \MCS_2, \hdots, \MCS_s$ in $\mbox{MCS}_b(\front)$ so that $\MCS = \MCS_1$, $\MCS'=\MCS_s$, and, for all $1 \leq i < s$, the set of handleslide marks of $\MCS_i$ and $\MCS_{i+1}$ differ by exactly one MCS move. The set $\widehat{\mbox{MCS}}_b(\front)$ is the set $\mbox{MCS}_b(\front) / \simeq$.
\end{definition}

MCSs of the following type have a standard form that makes their relationship with augmentations particularly simple to describe.

\begin{definition}
\label{defn:A-form}
An MCS $\MCS$ in $\mbox{MCS}_b(\front)$ is in \textbf{A-form} if there exists a set $R$ of degree $0$ crossings so that just to the left of each $q$ in $R$ there is a handleslide with endpoints on the strands crossing at $q$ and $\MCS$ has no other handleslides. A crossing $q$ in $R$ is said to be \textbf{marked}. 

Figure~\ref{f:MCS-example} shows an MCS in A-form where $R$ is the four left-most crossings. The subset $\mbox{MCS}_A(\front) \subset \mbox{MCS}_b(\front)$ consists of all A-form MCSs on $\front$. 
\end{definition}

\section{The Main Result}
\label{s:results}

Suppose $\Leg$ is a Legendrian knot with $\sigma$-generic front diagram $\front$, rotation number 0, and Maslov potential $\Maslov$. Before proving Theorem~\ref{t:bijection}, we require two definitions and a technical lemma. 

Let $D^2$ be the disk of radius 1 centered at the origin in $\R^2$. Choose $m+2$ points on $\df D^2$. Label the chosen points $\{b_0, \hdots, b_{m+1}\}$ counter-clockwise. Let $\gamma$ be the arc of $\df D^2$ with endpoints $b_{m+1}$ and $b_0$ and so that $b_1, \hdots, b_m$ are not in $\gamma$. Given $x_0 \in \R$ that is not the $x$-coordinate of any crossing or cusp of $\front$, we let $\{x_0\} \times [i,j]$ denote the vertical line segment with $x$-coordinate $x_0$ and endpoints on strands $i$ and $j$ of $\front$, where the strands of $\front$ above $x=x_0$ are numbered $1, 2, \hdots$ from top to bottom and $i < j$.

\begin{definition}
\label{defn:half-disk}
Let $\aug$ and $\aug'$ be homotopic augmentations in $\mbox{Aug}(\front)$ and let $H$ be a chain homotopy from $\aug$ to $\aug'$. An \textbf{$(\aug,\aug',H)$-half-disk} is a mapping of the two-disk $D^2$ into the $xz$-plane as in Definition \ref{defn:admissible} except for the following variations along the boundary:
\begin{enumerate}
\item The arc $\gamma$ maps to a vertical line $\{x_0\} \times [i,j]$ where $\Maslov(i)=\Maslov(j)$; see Figure~\ref{f:admissible} (f). We say the $(\aug,\aug',H)$-half-disk \textbf{originates at $\{x_0\} \times [i,j]$};
\item For exactly one $1 \leq j \leq m$, $f(b_j)$ is a degree $-1$ crossing $q_{k_j}$, $H(q_{k_j})=1$ holds, and $f$ has a convex corner at $f(b_j)$; see Figure~\ref{f:admissible} (d) or (e);
\item If $1 \leq i < j$ (resp. $j < i \leq m$), $f(b_i)$ is a degree $0$ crossing augmented by $\aug$ (resp. $\aug'$) and $f$ has a convex corner at $f(b_i)$;
\item The restriction of $f$ to $\partial D^2$ is  smooth except at $\{b_0, \hdots, b_{m+1}\}$ as described in (1) and (2) and at points in $\df D^2 \setminus (\{b_0, \hdots, b_{m+1}\})$ where the image of $f$ looks like Figure~\ref{f:admissible} (b) or (c).
\end{enumerate}
The set $\mathcal{H}(x_0, [i, j])$ consists of all $(\aug,\aug',H)$-half-disks originating at $\{x_0\}\times[i,j]$ up to reparametrization,  and $\# \mathcal{H}(x_0, [i, j])$ is the mod 2 count of elements in $\mathcal{H}(x_0, [i, j])$.
\end{definition}

\begin{definition}
\label{defn:aug-half-disk}
Let $\aug$ be an augmentation in $\mbox{Aug}(\front)$. An \textbf{$\aug$-half-disk} is a mapping $f$ of the two-disk $D^2$ into the $xz$-plane as in Definition \ref{defn:half-disk} except that conditions (2) and (3) are replaced with the requirement that all convex corners are at crossings that are augmented by $\aug$.

The set $\mathcal{G}^{\aug}(x_0, [i, j])$ consists of all $\aug$-half-disks originating at $\{x_0\}\times[i,j]$ up to reparametrization, and $\# \mathcal{G}^{\aug}(x_0, [i, j])$ is the mod 2 count of elements in $\mathcal{G}^{\aug}(x_0, [i, j])$.
\end{definition}

As in Definition~\ref{defn:admissible}, the points in the vertical line $\{x_0\} \times [i,j]$ are the right-most points of either an $(\aug, \aug', H)$-half-disk or an $\aug$-half-disk.

By Corollary 6.21 of \cite{Henry2011}, the map $\Phi: \mbox{MCS}_A(\front) \to \mbox{Aug}(\front)$ defined as follows is a bijection. Given $\MCS \in \mbox{MCS}_A(\front)$ and a generator $q$ of $\alg(\front)$, $\Phi(\MCS)(q) = 1$ holds if and only if $q$ is a marked crossing of $\MCS$. We let $\aug_{\MCS}$ be the augmentation $\Phi(\MCS)$. 

Lemma~\ref{l:aug-half-disks} below generalizes Lemma 7.10 in \cite{Henry2013} and Lemma 5.4 in \cite{Henry2014} by removing the assumption that the front diagram $D$ is nearly plat. 
Note that ``gradient paths'' from Lemma 7.10  in \cite{Henry2013} correspond to $\aug_{\MCS}$-half-disks in our terminology, and that Lemma 5.4 of \cite{Henry2014} allows more general coefficients.

\begin{lemma}
\label{l:aug-half-disks}
Suppose $\front$ is a $\sigma$-generic front diagram and $\MCS = ( \{C_m, d_m\}, \{x_m\}, H)$ is in $\mbox{MCS}_A(\front)$. Suppose $p \in \{1, \hdots, M\}$ and $x_p$ is to the immediate right of a crossing or cusp. Then, for all $i < j$, 
\begin{equation} \label{eq:mathcalG}
\langle d_p e_i, e_j \rangle= \#\mathcal{G}^{\aug_{\MCS}}(x_p, [i, j]) \mbox{ holds.}
\end{equation}
\end{lemma}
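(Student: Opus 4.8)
The plan is to prove the identity \eqref{eq:mathcalG} by induction on $p$, moving from left to right across the front diagram and checking that both sides of the equation transform in the same way across each crossing, cusp, and handleslide. The base case is $p=1$: just to the right of the leftmost left cusp there are exactly two strands $e_1, e_2$ with $\langle d_1 e_1, e_2\rangle = 1$, and one checks directly that the only $\aug_{\MCS}$-half-disk originating at $\{x_1\}\times[1,2]$ is the trivial bigon spanning the cusp, so both sides equal $1$; there are no other pairs $i<j$ to consider. For the inductive step, I would fix $p$ with $x_p$ immediately to the right of a crossing or cusp, let $x_{p'}$ (with $p' < p$, chosen as large as possible) be the previous $x$-coordinate immediately to the right of a crossing or cusp, and compare $d_p$ with $d_{p'}$; the tangle between them contains the crossing or cusp in question together with possibly several handleslides, and since $\MCS$ is in A-form each such handleslide sits immediately to the left of a marked crossing.

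The key computational steps are as follows. First, handle the effect of passing a handleslide: by Definition~\ref{defn:MCS}(5)(d) the chain complex changes by conjugation by $h_{k,l}$, which on matrix entries is the transvection $\langle d\, e_i, e_j\rangle \mapsto \langle d\, e_i, e_j \rangle + \delta_{jk}\langle d\, e_i, e_l\rangle + \delta_{il}\langle d\, e_k, e_j\rangle$ (mod $2$, keeping track of the triangularity). On the half-disk side, passing a handleslide with endpoints on strands $k<l$ across $x_0$ inserts or deletes a convex corner at the marked crossing $q$ just to the right: an $\aug_{\MCS}$-half-disk originating at $\{x_0\}\times[i,j]$ can, near that crossing, either pass straight through or turn a corner at $q$ (which is augmented, since $q\in R$), and breaking/gluing these disks at the handleslide produces exactly the transvection formula. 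This is the heart of the argument and mirrors the ``gradient path'' analysis of Lemma 7.10 of \cite{Henry2013}; the new feature is simply that without the nearly-plat hypothesis the strands $i,j$ need not be adjacent, so one must allow the half-disk to have several convex corners between $\{x_0\}\times[i,j]$ and its terminal cusp/crossing, all of which are forced to be augmented crossings. Second, handle passing a crossing: by Definition~\ref{defn:MCS}(5)(a) the complex changes by conjugation by the transposition $\phi$ swapping strands $k, k+1$, and the corresponding half-disks are re-indexed by relabeling the two strands that cross — a straightforward bookkeeping check, using Definition~\ref{defn:MCS}(4) which guarantees $\langle d_p e_k, e_{k+1}\rangle = 0$ so no half-disk terminates at the crossing itself. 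Third, handle passing a right cusp: Definition~\ref{defn:MCS}(5)(b) says $d_{p}$ is obtained from the complex to the left by quotienting by the acyclic subcomplex generated by $\{e_k, d e_k\}$ (with $\langle d e_k, e_{k+1}\rangle = 1$ by (4)); on the half-disk side the two strands $k, k+1$ disappear and a half-disk terminating just to the left at $\{x_0\}\times[i,j]$ with $i$ or $j$ equal to $k$ must be continued/capped using the cusp-overlap picture of Figure~\ref{f:admissible}(c), which reproduces the substitution $e_{k+1}\mapsto d e_k$ (minus the $e_{k+1}$ term) inherent in the quotient. Left cusps introduce the new pair $e_k, e_{k+1}$ with $\langle d e_k, e_{k+1}\rangle = 1$ and, because $\MCS$ has simple left cusps (Definition~\ref{defn:simple}), no other new entries, matching the fact that the only new half-disk is the bigon across the cusp; this is where the A-form/simple-left-cusp hypothesis is used essentially.

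I expect the main obstacle to be the right-cusp case together with the handleslide case when the strands $i,j$ are far apart: one must argue carefully that every $\aug_{\MCS}$-half-disk decomposes uniquely, under the gluing at a handleslide or the capping at a cusp, into a half-disk of the appropriate type on the other side plus at most one small ``corner'' piece, and conversely that every such disk on the other side extends uniquely. Making the gluing/breaking correspondence a genuine bijection (so that the mod-$2$ counts match) requires checking that the immersion, orientation, and convexity conditions of Definition~\ref{defn:half-disk} are preserved, and that no spurious disks with non-immersed interior or with a non-augmented corner are created — exactly the kind of local analysis of $\partial D^2$ near the singularities of Figure~\ref{f:admissible} that \cite{Ng2003} and \cite{Henry2013} carry out, now without the simplifying nearly-plat hypothesis. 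I would organize this as a sequence of lemmas, one per move type, each asserting that if \eqref{eq:mathcalG} holds to the left of the move then it holds to the right.
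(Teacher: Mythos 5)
Your plan is correct and follows essentially the same route as the paper: induction on $p$ from left to right, with the base case at the first left cusp and a case analysis matching the differential's transformation rule at each left cusp, crossing (with its adjacent handleslide), and right cusp against a gluing/breaking bijection of half-disks, the right cusp being handled exactly by your quotient-complex computation $\langle d_p e_i, e_j\rangle = \langle d_{p-1}e_{\pi(i)}, e_{\pi(j)}\rangle + \langle d_{p-1}e_{\pi(i)}, e_{k+1}\rangle\cdot\langle d_{p-1}e_k, e_{\pi(j)}\rangle$ paired with the decomposition of cusp-touching disks into overlapping pairs. The only difference is that the paper disposes of the crossing/handleslide step by citing Lemma 7.10 of \cite{Henry2013} (and Lemma 5.4 of \cite{Henry2014}) rather than redoing the transvection-versus-corner-insertion argument you sketch (in which, note, the roles of $k$ and $l$ in your transvection formula should be swapped to match the convention $h_{k,l}(e_k)=e_k+e_l$).
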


\begin{proof}
We induct on $p$.  The base case, $p=1$, follows since there is a unique disk in $\mathcal{G}^{\aug_{\MCS}}(x_1, [1, 2])$, as in Figure \ref{f:admissible} (b), while $\langle d_1 e_1, e_2 \rangle=1$ holds according to (4) of Definition \ref{defn:MCS}.  

Assume now that $x_p$ sits to the immediate right of a crossing or cusp and that the result is known for smaller values of $p$.  We complete the inductive step by considering cases.

{\bf Left cusp.}  Suppose $x_p$ is to the right of a left cusp with the two strands that meet at the cusp labeled $k$ and $k+1$ at $x_p$.  Define $\tau: \{1,\ldots, s_{p-1}\} \rightarrow \{1, \ldots, s_p\}$ by $\tau(i) =\left\{ \begin{array}{cr} i,  & \mbox{if $i<k$} \\ i+2 & \mbox{if $i\geq k$.}  \end{array} \right.$  (Note that $s_{p-1} = s_p -2$.)  For any $1\leq i' < j' \leq s_{p-1}$ there is a bijection between $\mathcal{G}^{\aug_{\MCS}}(x_{p-1}, [i', j'])$ and $\mathcal{G}^{\aug_{\MCS}}(x_p, [\tau(i'), \tau(j')])$; see, for example, Figure~\ref{f:leftcuspcase}. Moreover, (5)(c) of Definition \ref{defn:MCS} together with the requirement that $\MCS$ has simple left cusps give 
\[
\langle d_{p-1} e_{i'}, e_{j'} \rangle = \langle d_{p} e_{\tau(i')}, e_{\tau(j')} \rangle,
\]
so (\ref{eq:mathcalG}) follows  when $i= \tau(i')$ and $j=\tau(j')$.  

\begin{figure}[t]
\labellist
\small\hair 2pt
\pinlabel {(a)} [tl] at 76 25
\pinlabel {(b)} [tl] at 280 25
\endlabellist
\centering
\includegraphics[scale=.9]{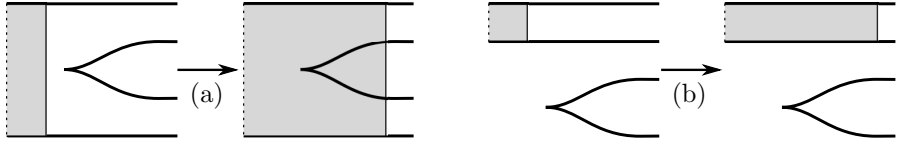}
\caption{Possible extensions of an $\aug$-half-disk or $(\aug, \aug', H)$-half-disk past a left cusp.}
\label{f:leftcuspcase}
\end{figure}

It remains to consider those cases where $\{i,j\} \cap \{k,k+1\} \neq \emptyset$.  Suppose that precisely one of $i$ or $j$ belongs to $\{k,k+1\}$. As $\MCS$ has simple left cusps, we have $\langle d_{p} e_{i}, e_{j} \rangle=0$.  In addition, the restriction on the behavior of an $\aug$-half disk near a left cusp from Figure \ref{f:admissible} (b) gives that $\mathcal{G}^{\aug_{\MCS}}(x_p, [i, j])= \emptyset$, so (\ref{eq:mathcalG}) holds.  Finally, when $i=k$ and $j=k+1$, there is a unique $\aug$-half disk in $\mathcal{G}^{\aug_{\MCS}}(x_p, [k, k+1])$.  (This disk has no convex corners, so (2) of Definition \ref{defn:aug-half-disk} is vacuously satisfied.)  Therefore, (\ref{eq:mathcalG}) follows in view of (4) from Definition \ref{defn:MCS}.

{\bf Crossing.}  When $x_p$ sits immediately to the right of a crossing, the inductive step is achieved precisely as in Lemma 7.10 of \cite{Henry2013} or  Lemma 5.4 of \cite{Henry2014} (the signs in the latter reference may be ignored). The arguments in these references apply regardless of whether or not the crossing is marked. 

{\bf Right cusp.} Suppose a right cusp sits between $x_p$ and $x_{p-1}$ with the strands that meet at the cusp labeled $k$ and $k+1$ at $x_{p-1}$. Let $a_{i,j}$ be $\langle d_{p-1} e_i, e_j\rangle$. In the quotient of $(C_{p-1},d_{p-1})$ by the subcomplex spanned by $e_k$ and $d_{p-1} e_k$, we have
\[
0 = [d_{p-1}e_{k}] = [e_{k+1}] + \sum_{k+1 < j} a_{k,j} [e_j],
\]
 so
\[
d_{p-1} [e_i] = \sum_{i < j} a_{i,j} [e_j] = \sum_{i< j< k} a_{i,j} [e_j] + \sum_{k+1 < j} (a_{i,j} + a_{i,k+1}\cdot a_{k,j})  [e_j].
\]
Using Definition \ref{defn:MCS}~(5)~(b), this gives the computation of the differential in $(C_p,d_p)$ as
\begin{equation}  \label{eq:RCuspCompd}
\langle d_p e_{i}, e_{j}\rangle = \langle d_{p-1}e_{\pi(i)}, e_{\pi(j)} \rangle + \langle d_{p-1} e_{\pi(i)}, e_{k+1} \rangle\cdot \langle d_{p-1} e_{k}, e_{\pi(j)}\rangle,
\end{equation}
where $\pi: \{1,\ldots, s_{p}\} \rightarrow \{1, \ldots, s_{p-1}\}$ is defined by $\pi(i) =\left\{ \begin{array}{cr} i,  & \mbox{if $i<k$} \\ i+2 & \mbox{if $i\geq k$.} \end{array} \right.$ 
We note that the second term on the right can be non-zero only if $i<k \leq j$; see Figure \ref{f:epsilonLemma}.

\begin{figure}[t]
\labellist
\small\hair 2pt
\pinlabel {$x_{p-1}$} [t] at 32 -2
\pinlabel {$x_p$} [t] at 104 -2
\pinlabel {$a_{i,j}+a_{i,k}\cdot a_{i,k+1}$} [l] at 106 52
\pinlabel {$a_{i,k+1}$} [l] at 34 64
\pinlabel {$a_{i,j}$} [r] at 14 64
\pinlabel {$a_{k,j}$} [l] at 50 44
\pinlabel {$i$} [l] at 278 72
\pinlabel {$j$} [l] at 278 16
\endlabellist
\centering
\includegraphics[scale=1.2]{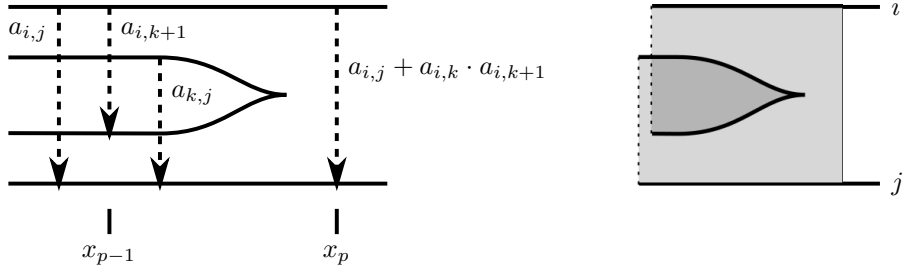}
\caption{(left) The relation between differentials at a right cusp.  A dotted arrow at $x_l$ pointing from strand $i$ to strand $j$ indicates the matrix coefficient $\langle d_l e_i, e_j \rangle$.  (right)  The appearance of disks  in $\mathcal{G}^{\aug_{\MCS}}(x_p, [i, j])$ with a boundary point at the right cusp between $x_{p-1}$ and $x_p$.}
\label{f:epsilonLemma}
\end{figure}

\begin{figure}[t]
\labellist
\small\hair 2pt
\pinlabel {(a)} [tl] at 66 100
\pinlabel {(b)} [tl] at 254 100
\pinlabel {(c)} [tl] at 65 24
\endlabellist
\centering
\includegraphics[scale=.9]{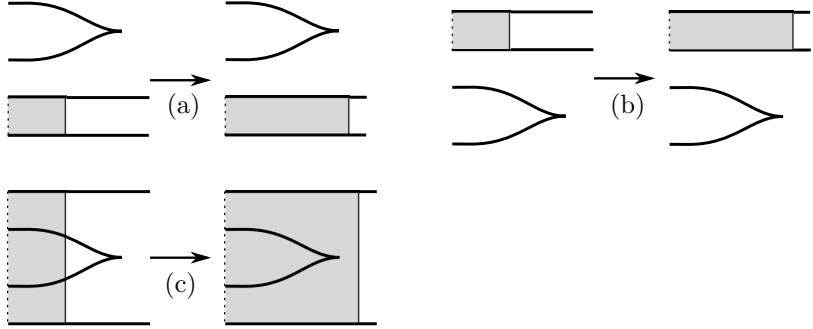}
\caption{Possible extensions of an $\aug$-half-disk or $(\aug, \aug', H)$-half-disk past a right cusp.}
\label{f:cuspcase}
\end{figure}

To complete the proof, we combine Equation (\ref{eq:RCuspCompd}) with the observation that $\aug$-half disks satisfy a bijection
\begin{align*}
\mathcal{G}^{\aug_{\MCS}}(x_p, [i, j]) &\cong \mathcal{G}^{\aug_{\MCS}}(x_{p-1}, [\pi(i), \pi(j)]) \\ \nonumber
&\cup \left(\mathcal{G}^{\aug_{\MCS}}(x_{p-1}, [\pi(i),k+1])\times \mathcal{G}^{\aug_{\MCS}}(x_{p-1}, [k,\pi(j)]) \right)
\end{align*}
explained as follows:  Those disks in $\mathcal{G}^{\aug_{\MCS}}(x_p, [i, j])$ whose boundaries do not intersect the cusp point are in bijection with $\mathcal{G}^{\aug_{\MCS}}(x_{p-1}, [\pi(i), \pi(j)])$; see Figure~\ref{f:cuspcase}.  Disks in $\mathcal{G}^{\aug_{\MCS}}(x_p, [i, j])$ whose boundaries do intersect the cusp point appear between $x_{p-1}$ and $x_p$ as pictured in Figure \ref{f:epsilonLemma}.  Removing the portion of the disk between  $x_{p-1}$ and $x_p$ leaves a pair of initially overlapping disks from $\mathcal{G}^{\aug_{\MCS}}(x_{p-1}, [\pi(i),k+1])\times \mathcal{G}^{\aug_{\MCS}}(x_{p-1}, [k,\pi(j)])$, and this correspondence is bijective.
\end{proof}


We are now in a position to prove Theorem~\ref{t:bijection}. Recall that an augmentation $\aug$ has an associated A-form MCS $\MCS$ where a degree $0$ crossing $q$ is marked by $\MCS$ if and only if $\aug(q)$ is $1$. The proof of Theorem~\ref{t:bijection} reduces to showing that if augmentations $\aug$ and $\aug'$ are homotopic, then their associated A-form MCSs $\MCS$ and $\MCS'$ are equivalent. This is accomplished in Lemma~\ref{l:equiv} where an algorithm is given to translate a chain homotopy $H$ from $\aug$ to $\aug'$ into a sequence of MCS moves from $\MCS$ to $\MCS'$. In particular, for each degree $-1$ crossing $p$ sent to $1$ by $H$, we employ MCS move (13) just to the left of $p$ to introduce new handleslides. We prove that these handleslides give the mod 2 count of certain $(\aug, \aug', H)$-half disks. Moving these handleslides to the right in the front diagram $\front$, we find that a degree $0$ crossing $q$ is changed from marked to unmarked or from unmarked to marked if and only if there exists an odd number of $(\aug, \aug', H)$-half disks originating at $q$. Therefore, by Proposition~\ref{prop:admissible-disks} and the definition of $\MCS$ and $\MCS'$, $q$ is changed from marked to unmarked or from unmarked to marked if and only if $\MCS$ and $\MCS'$ differ at $q$. We may therefore conclude that $\MCS$ and $\MCS'$ are equivalent.  

\begin{proof}[Proof of Theorem~\ref{t:bijection}]
By Proposition 3.17 of \cite{Henry2011}, the natural inclusion of $\mbox{MCS}_b(\front)$ into $\mbox{MCS}(\front)$ induces a bijection from $\widehat{\mbox{MCS}}_b(\front)$ to $\widehat{\mbox{MCS}}(\front)$. Therefore, it suffices to construct a bijection from $\widehat{\mbox{MCS}}_b(\front)$ to $\mbox{Aug}^{ch}(\front)$. In Section 6 of \cite{Henry2011}, a surjective map $\widehat{\Psi}$ is constructed from $\widehat{\mbox{MCS}}_b(\front)$ to $\mbox{Aug}^{ch}(\front)$. We will prove this map is injective. By Theorem 1.6 of \cite{Henry2011}, every MCS is equivalent to an A-form MCS. Therefore, every MCS equivalence class contains an A-form representative. We give the definition of $\widehat{\Psi}$ in terms of A-form representatives and, in so doing, avoid most of the technical details of \cite{Henry2011}. By Corollary 6.21 of \cite{Henry2011}, given an MCS class $[\MCS]$ with A-form representative $\MCS$, $\widehat{\Psi}([\MCS])$ is the augmentation homotopy class $[\aug_{\MCS}]$ where a degree 0 crossing $q$ is augmented by $\aug_{\MCS}$ if and only if $q$ is marked by $\MCS$. Lemma~\ref{l:equiv} shows that if $\aug_{\MCS_1}$ is homotopic to $\aug_{\MCS_2}$, then $\MCS_1$ is equivalent to $\MCS_2$. It follows that $\widehat{\Psi}$ is injective. 
\end{proof}

\begin{lemma}
\label{l:equiv}
Suppose $\front$ is a $\sigma$-generic front diagram and $\MCS$ and $\MCS'$ are in $\mbox{MCS}_A(\front)$ with corresponding augmentations $\aug_{\MCS}$ and $\aug_{\MCS'}$, respectively. If $\aug_{\MCS}$ and $\aug_{\MCS'}$ are homotopic, then $\MCS$ and $\MCS'$ are equivalent as MCSs.
\end{lemma}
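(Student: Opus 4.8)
The plan is to take the chain homotopy $H$ from $\aug_{\MCS}$ to $\aug_{\MCS'}$ and convert it, degree-$(-1)$ crossing by degree-$(-1)$ crossing, into an explicit sequence of MCS moves transforming $\MCS$ into $\MCS'$. Since $H$ is determined by its values on the degree $-1$ crossings of $\front$ (Kálmán's Lemma 2.18 of \cite{Kalman2005}), let $p_1, \dots, p_r$ be the degree $-1$ crossings with $H(p_\ell) = 1$, ordered left to right by $x$-coordinate. Starting from $\MCS$, I would process these one at a time: just to the left of each $p_\ell$, the two strands meeting at $p_\ell$ have Maslov potentials differing by $1$, so MCS move (13) applies and introduces a collection $K_\ell$ of handleslides determined by the differential $d_m$ of the A-form MCS at that location. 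The main bookkeeping device is Lemma~\ref{l:aug-half-disks}, which identifies the matrix coefficients $\langle d_m e_i, e_j\rangle$ of an A-form MCS with mod $2$ counts of $\aug_{\MCS}$-half-disks; this lets me translate the combinatorial handleslide data produced by move (13) into statements about half-disks.

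The heart of the argument is a propagation claim: after inserting the move-(13) handleslides to the left of $p_1, \dots, p_r$ and then sliding all of these new handleslides rightward through the front diagram using MCS moves (the commutation and merging moves (1)--(12), and re-applying (13)-type relations as needed when handleslides pass crossings and cusps), the net effect on the handleslide set is to toggle the ``marked'' status of a degree $0$ crossing $q$ precisely when the mod $2$ count $\#\mathcal{H}(x_q, [i,j])$ of $(\aug_{\MCS}, \aug_{\MCS'}, H)$-half-disks originating at $q$ is odd. I would prove this by an induction on $x$-coordinate analogous to the induction in the proof of Lemma~\ref{l:aug-half-disks}: I track, at each vertical slice $x_m$ to the right of the processed crossings, an invariant asserting that the handleslides present are exactly those with endpoints on strands $i < j$ with $\Maslov(i) = \Maslov(j)$ such that $\#\mathcal{H}(x_m, [i,j])$ is odd, plus the A-form handleslides for whatever the current marking is. The inductive step is checked case by case on whether the tangle between $x_m$ and $x_{m+1}$ contains a crossing (marked or unmarked), a left cusp, or a right cusp, using the half-disk extension bijections from Figures~\ref{f:leftcuspcase}, \ref{f:epsilonLemma}, and \ref{f:cuspcase} exactly as in Lemma~\ref{l:aug-half-disks}, together with Proposition~\ref{prop:admissible-disks} to handle the passage through a degree $0$ crossing $q$: the half-disk count $\#\mathcal{H}(x_q,[i,j])$ for the two strands at $q$ equals $(\aug_{\MCS} - \aug_{\MCS'})(q)$, so a handleslide parallel to $q$ appears (or the existing A-form handleslide cancels) exactly when $\MCS$ and $\MCS'$ disagree at $q$, which is the required toggle.

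Once this propagation claim is established, the conclusion is quick: after pushing all handleslides to the far right, every $(\aug_{\MCS}, \aug_{\MCS'}, H)$-half-disk count $\#\mathcal{H}(x_M, [i,j])$ vanishes (a half-disk originating at the extreme right would have to close up, which is impossible), so the ``new'' handleslides all disappear, while the marking has been updated at exactly the crossings where $\aug_{\MCS}$ and $\aug_{\MCS'}$ differ. By Proposition~\ref{prop:admissible-disks} those are precisely the crossings where the markings of $\MCS$ and $\MCS'$ differ, so the resulting MCS is $\MCS'$ (up to further moves among equivalent A-form presentations). Hence $\MCS \simeq \MCS'$.

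The main obstacle I anticipate is making the ``slide the handleslides to the right'' step rigorous: one must verify that at each elementary step the required modification is genuinely one of the listed MCS moves (1)--(13) of Figures~\ref{f:moves} and \ref{f:explosion}, including the reflected variants, and that the local chain complexes transform consistently so that one stays within $\mbox{MCS}_b(\front)$ at every stage (invoking Proposition 3.8 of \cite{Henry2011}). In particular, when a handleslide passes a crossing or cusp it can spawn or absorb further handleslides, and one must check these cascades terminate and match the half-disk bookkeeping; handling the interaction of several move-(13) insertions whose handleslide sets overlap before being propagated is the delicate case. I expect this to parallel, but be somewhat more involved than, the nearly-plat arguments of \cite{Henry2013, Henry2014}, and Lemma~\ref{l:aug-half-disks} is precisely the tool that removes the nearly-plat restriction needed to run the general case.
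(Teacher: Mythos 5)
Your proposal follows essentially the same route as the paper's proof: a left-to-right sweep carrying a collection of handleslides whose presence is governed by the invariant $v^{i,j} = \#\mathcal{H}(x,[i,j])$, with MCS move (13) applied at each degree $-1$ crossing where $H=1$, Lemma~\ref{l:aug-half-disks} converting the move-(13) data into half-disk counts, and Proposition~\ref{prop:admissible-disks} showing the marking toggles at a degree $0$ crossing exactly when $\aug_{\MCS}$ and $\aug_{\MCS'}$ differ there. The one organizational difference is that the paper performs a single sweep (applying move (13) only when the traveling collection reaches each $p_\ell$, which sidesteps the ``overlapping insertions'' delicacy you flag), rather than inserting all move-(13) collections first and then propagating.
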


\begin{proof}

Suppose $\MCS$ and $\MCS'$ are A-form MCSs and $\aug_{\MCS}$ is homotopic to $\aug_{\MCS'}$. We simplify notation by letting $\aug$ be $\aug_{\MCS}$ and $\aug'$ be $\aug_{\MCS'}$. Since $\aug$ and $\aug'$ are homotopic, there exists a chain homotopy $H : \alg(\front) \to \Z / 2 \Z$. Label the degree $-1$ crossings sent to 1 by $H$, from left to right, $p_1, \hdots, p_m$.

\begin{figure}[t]
\centering
\includegraphics[scale=.9]{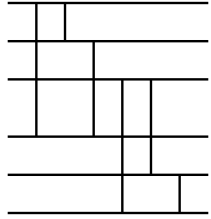}
\caption{An ordered collection of handleslides.}
\label{f:ordered}
\end{figure}

To prove the lemma, we will construct a sequence of MCSs $\MCS_0, \hdots, \MCS_s$ so that $\MCS_0$ is $\MCS$ and $\MCS_s$ is $\MCS'$, and, for all $0 \leq r < s$, $\MCS_r \simeq \MCS_{r+1}$ holds. The construction of the $\MCS_r$ is inductive, and each of the $\MCS_r$ will contain a, possibly empty, collection of handleslides $V_r$ that are grouped together immediately to the right of a particular crossing or cusp.  

For our purposes it will be convenient to require that the handleslides in each of the $V_r$ are ordered in the following sense:   We say a collection of handleslides is \textbf{ordered} if, given two handleslides $h$ and $h'$ in the collection with endpoints on strands $i<j$ and $i'<j'$ respectively, $h$ is right of $h'$ if and only if $i>i'$ holds, or $i=i'$ and $j < j'$ hold; see Figure~\ref{f:ordered}. We let $v_r^{i,j}$ be $1$ if there exists a handleslide in $V_r$ with endpoints on strands $i$ and $j$, where $i < j$. Otherwise, $v_r^{i,j}$ is defined to be $0$. In a slight abuse of notation, we also let $v_r^{i,j}$ refer to the handleslide in $V_r$ with endpoints on $i$ and $j$, if such a handleslide exists. 
 
We will verify that Property~\ref{prop:count} below holds for all $0 \leq r \leq s$ as we inductively construct MCSs $\MCS_r$ with ordered handleslide collections $V_r$.  

\begin{property}
\label{prop:count}
\begin{enumerate}
	\item[(a)] The MCS $\MCS_r$ agrees with $\MCS'$ to the left of $V_r$ and $\MCS$ to the right of $V_r$.
	\item[(b)] For all $i < j$, $$v_r^{i,j} = \# \mathcal{H}(x_r, [i, j])$$ holds, where $x_r$ is the $x$-coordinate of the left-most handleslide in $V_r$. 
\end{enumerate}
\end{property}
  
Each time $r$ increases, the collection of handleslides $V_r$ is pushed to the right past one cusp or crossing. We continue this inductive process until we arrive at an MCS $\MCS_s$ with  $V_s$ located just to the left of the right-most right cusp of $\front$.  Since the two strands of this cusp do not have the same Maslov potential, $V_s$ must be empty. Then, Property~\ref{prop:count} (a) shows that $\MCS_s$ is $\MCS'$. As $\MCS = \MCS_0$, and  for $0 \leq i < s$, $\MCS_i \simeq \MCS_{i+1}$ it will then follow that $\MCS \simeq \MCS'$ holds, as desired.

In the remainder of the proof we construct the sequence of MCSs $\MCS_0, \hdots, \MCS_s$.
Since a crossing $q$ is augmented by $\aug$ (resp. $\aug'$) if and only if $q$ is marked by $\MCS$ (resp. $\MCS'$), Proposition~\ref{prop:admissible-disks} implies $\MCS$ and $\MCS'$ differ at $q$ if and only if there exists an odd number of $(\aug, \aug', H)$-admissible disks originating at $q$. Since $q$ is the right-most point of an $(\aug, \aug', H)$-admissible disks originating at $q$, it follows that there are no admissible disks originating to the left of $p_1$ (which is the first crossing sent to $1$ by $H$).  Therefore, $\MCS$ and $\MCS'$ are identical to the left of $p_1$.  We can then set $\MCS_0= \MCS$ and define $V_0$ to be empty, but located just to the left of $p_1$. It follows that Property \ref{prop:count} holds for $\MCS_0$. 

Given $\MCS_r$ and $V_r$, we will construct $\MCS_{r+1}$ and $V_{r+1}$ by applying MCS moves to $\MCS_r$. We will prove that if Property~\ref{prop:count} holds for $\MCS_r$, then it holds for $\MCS_{r+1}$ as well. We consider five cases depending on the type of crossing or cusp just to the right of $V_r$. Let $q$ be the crossing or cusp point to the immediate right of $V_r$. Let $x_r$ (resp. $x_{r+1}$) be an $x$-coordinate to the immediate left (resp. right) of $q$. In each of the five cases, we first analyze the $(\aug, \aug', H)$-half-disks in $\mathcal{H}(x_{r+1}, [i, j])$ before describing the sequence of MCS moves used to construct $\MCS_{r+1}$ from $\MCS_r$ and proving Property~\ref{prop:count} holds for $\MCS_{r+1}$ and $V_{r+1}$.

In the first three cases considered, $q$ is a crossing between strands $k$ and $k+1$ where the strands of $\front$ have been numbered $1, \hdots, s_r$, from top to bottom, just to the left of $q$. Let $\rho : \{1, \hdots, s_r\} \to \{1, \hdots, s_r\}$ be the permutation that transposes $k$ and $k+1$.

\begin{figure}[t]
\labellist
\small\hair 2pt
\pinlabel {(a)} [tl] at 76 226
\pinlabel {(b)} [tl] at 297 226
\pinlabel {(c)} [tl] at 76 130
\pinlabel {(d)} [tl] at 297 130
\pinlabel {(e)} [tl] at 76 33
\pinlabel {(f)} [tl] at 297 33
\endlabellist
\centering
\includegraphics[scale=.9]{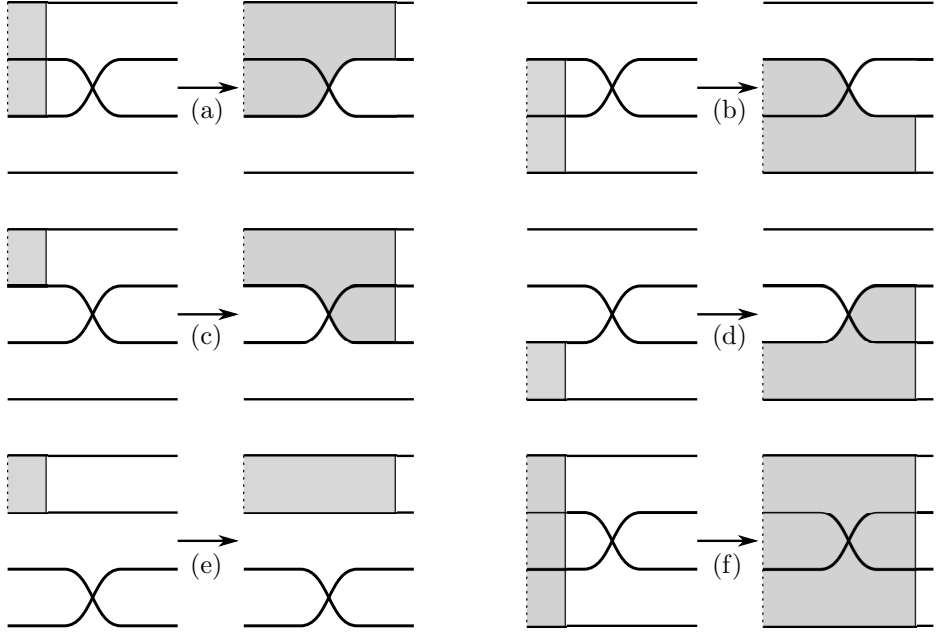}
\caption{Possible extensions of an $(\aug, \aug', H)$-half-disk past a crossing.}
\label{f:simplecrossing}
\end{figure}

\textbf{Crossing $q$ such that $|q|\neq 0$ and $H(q) \neq 1$:} 
Since $|q|$ is non-zero, $\mathcal{H}(x_{r}, [k, k+1])$ and $\mathcal{H}(x_{r+1}, [k, k+1])$ are both empty. Given $1 \leq i < j \leq s_r$ such that $(i,j) \neq (k,k+1)$, $(\aug, \aug', H)$-half-disks in $\mathcal{H}(x_{r+1}, [i, j])$ cannot have a convex corner at $q$, since $|q| \neq 0$ and $H(q) \neq 1$ hold. In fact, $\# \mathcal{H}(x_{r+1}, [i, j]) = \# \mathcal{H}(x_r, [\rho(i), \rho(j)])$ holds, since there is a natural bijection between $\mathcal{H}(x_{r+1}, [i, j])$ and $\mathcal{H}(x_r, [\rho(i), \rho(j)])$; see, for example, Figure~\ref{f:simplecrossing}. 

We now define the sequence of MCS moves that create $\MCS_{r+1}$ from $\MCS_r$ and prove Property~\ref{prop:count} holds for $\MCS_{r+1}$. Move all handleslides of $V_r$ to the right of $q$ using MCS moves (7) - (9). Note that, since $|q|$ is non-zero, $v_r^{k,k+1}$ is $0$ and, therefore, all handleslides of $V_r$ can be moved past $q$ and no new handleslides are created by doing so. The resulting collection can be ordered, using MCS moves, without creating new handleslides.  The reordering requires rearranging handleslides with one endpoint on either strand $k$ or $k+1$. Since $|q|$ is non-zero, there is no handleslide between $k$ and $k+1$, and, therefore, the rearrangement can be done without using MCS Move (4). The resulting ordered collection is $V_{r+1}$ and the MCS is $\MCS_{r+1}$, and $v_{r+1}^{i,j} = v_r^{\rho(i), \rho(j)}$ holds for all $1 \leq i < j \leq s_r$.  By Property~\ref{prop:count} (b), $v_r^{\rho(i), \rho(j)} = \# \mathcal{H}(x_r, [\rho(i), \rho(j)])$ holds and, as shown above, $\# \mathcal{H}(x_{r+1}, [i, j]) = \# \mathcal{H}(x_r, [\rho(i), \rho(j)])$ holds. Therefore, Property~\ref{prop:count} (b) holds for $\MCS_{r+1}$. Property~\ref{prop:count} (a) holds for $\MCS_r$ and, since $|q|$ is non-zero, $q$ is not marked by either $\MCS_{r+1}$ or $\MCS'$. Therefore, Property~\ref{prop:count} (a) holds for $\MCS_{r+1}$.

\textbf{Crossing $q$ such that $|q| = 0$:} Let $v_q$ be 1 if $q$ is marked by $\MCS$ and $0$ otherwise. Since Property~\ref{prop:count} (a) holds for $\MCS_r$, if $q$ is marked by $\MCS$, then $q$ is marked by $\MCS_r$ as well. We slightly abuse notation and also let $v_q$ be the handleslide at $q$ in $\MCS_r$ in the case such exists. 

Suppose $i \neq k+1$ and $j \neq k$. Half-disks in $\mathcal{H}(x_{r+1}, [i, j])$ cannot have a convex corner at $q$ and, therefore, there is a bijection from $\mathcal{H}(x_{r}, [i,  j])$ to $\mathcal{H}(x_{r+1}, [\rho(i), \rho(j)])$; see, for example, Figure~\ref{f:simplecrossing} (c) - (f). Since Property~\ref{prop:count} (b) holds for $\MCS_r$, $\# \mathcal{H}(x_{r+1}, [i, j]) = v_r^{\rho(i), \rho(j)}$ holds. 

\begin{figure}[t]
\labellist
\small\hair 2pt
\pinlabel {(a)} [tl] at 74 33
\pinlabel {(b)} [tl] at 293 33
\pinlabel {$v_q$} [bl] at 235 52
\pinlabel {$v_r^{k,k+1}+v_q$} [bl] at 0 5
\endlabellist
\centering
\includegraphics[scale=.9]{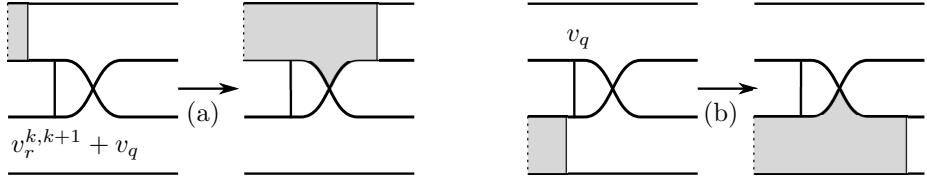}
\caption{Extending an $(\aug, \aug', H)$-half-disk past a degree $0$ crossing so as to have a convex corner at the crossing.}
\label{f:degree0}
\end{figure}

Note that $\mathcal{H}(x_{r+1}, [k, k+1])$ is empty. Suppose one of $i = k+1$ or $j=k$ hold. Half-disks in $\mathcal{H}(x_{r}, [\rho(i), \rho(j)])$ may be smoothly extended past $q$ as in Figure~\ref{f:simplecrossing} (a) and (b). Therefore, there exists an injection from $\mathcal{H}(x_{r}, [\rho(i), \rho(j)])$ to $\mathcal{H}(x_{r+1}, [i, j])$. However, there may be half-disks in $\mathcal{H}(x_{r+1}, [i, j])$ that have a convex corner at $q$. If $j=k$ (resp. $i=k+1$) and $q$ is marked by $\MCS'$ (resp. $\MCS$), then a half-disk in $\mathcal{H}(x_{r+1}, [i, j])$ can have a convex corner at $q$; see Figure~\ref{f:degree0} (a) and (b) respectively. Such half-disks are in bijection with half-disks in $\mathcal{H}(x_{r}, [i, j])$, as can be seen in Figure~\ref{f:degree0} (a) and (b), and by Property~\ref{prop:count} (b), are counted by $v_r^{i,j}$. Since $v_r^{k,k+1}$ is $1$ if and only if $\MCS'$ and $\MCS$ differ at $q$, $q$ is marked by $\MCS'$ (resp. $\MCS$) if and only if $v_r^{k,k+1}+v_q$ is $1$ (resp. $v_q$ is $1$). Therefore, if $j=k$ (resp. $i=k+1$), then the mod 2 count of half-disks in $\mathcal{H}(x_{r+1}, [i, j])$ with a convex corner at $q$ is $(v_r^{k,k+1}+v_q) \cdot v_r^{i,k}$ (resp. $v_q \cdot v_r^{k+1,j}$). In summary,
\begin{equation}
\label{eq:crossing2}
\# \mathcal{H}(x_{r+1}, [i, j]) = \left\{ \begin{array}{rl}
 v_{r}^{i,k+1} + (v_r^{k,k+1}+v_q) \cdot v_r^{i,k}  &\mbox{ if $j = k$} \\
 v_{r}^{k,j} + v_q \cdot v_r^{k+1,j} &\mbox{ if $i = k+1$} \\ 
 0 &\mbox{ if $i=k$ and $j=k+1$} \\
 v_{r}^{\rho(i),\rho(j)}  &\mbox{ otherwise.} 
       \end{array} \right. 
\end{equation}       

We now define the sequence of MCS moves that create $\MCS_{r+1}$ from $\MCS_r$ and prove Property~\ref{prop:count} holds for $\MCS_{r+1}$. We move each handleslide $v_r^{i,j}$ of $V_r$ past $q$ beginning with the right-most handleslide in $V_r$. If $i \geq k$ and $j \neq k+1$ hold, use MCS moves (2) - (9) to move $v_r^{i,j}$ past $v_q$, if $v_q$ is $1$, and then past the crossing $q$. If $v_q$ is $1$ and $i=k+1$, a new handleslide with endpoints on strands $k$ and $j$ is created when MCS move (4) is used to move $v_r^{i,j}$ past $v_q$. Move this handleslide to the right of $q$ as well. It is not possible to move $v_r^{k,k+1}$ past $q$ and so, for now, we simply leave $v_r^{k,k+1}$ to the left of $q$. If $i < k$ holds, use MCS moves (2) - (9) to move $v_r^{i,j}$ past $v_r^{k,k+1}$, if $v_r^{k,k+1}$ is $1$, then past $v_q$, if $v_q$ is $1$, and then past the crossing $q$. If $v_q$ is $1$ or $v_r^{k,k+1}$ is $1$, and $j=k$, a new handleslide with endpoints on strands $i$ and $k+1$ is created when MCS move (4) is used to move $v_r^{i,j}$ past $v_q$ or $v_r^{k,k+1}$. Move this handleslide to the right of $q$ as well. Once all $v_r^{i,j}$, except $v_r^{k,k+1}$, have been moved past $q$, use MCS moves (5) and (1) to order the collection of handleslides just to the right of $q$ and  remove pairs of handleslides that have the same endpoints. The resulting collection is $V_{r+1}$. From our work above, we have:
\begin{equation}
\label{eq:crossing}
v_{r+1}^{i,j} = \left\{ \begin{array}{rl}
 v_{r}^{i,k+1} + (v_r^{k,k+1}+v_q) \cdot v_r^{i,k}  &\mbox{ if $j = k$} \\
 v_{r}^{k,j} + v_q \cdot v_r^{k+1,j} &\mbox{ if $i = k+1$} \\ 
 0 &\mbox{ if $i=k$ and $j=k+1$} \\
 v_{r}^{\rho(i),\rho(j)}  &\mbox{ otherwise.} 
       \end{array} \right.
\end{equation}

\begin{figure}[t]
\centering
\includegraphics[scale=.9]{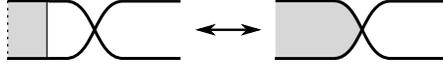}
\caption{The bijection between $(\aug, \aug', H)$-half-disks originating at $\{x_r\} \times [k,k+1]$ and $(\aug, \aug', H)$-half-disks originating at a crossing $q$ between strands $k$ and $k+1$.}
\label{f:crossingstrands}
\end{figure}

Use MCS move (1) to remove both $v_r^{k,k+1}$ and $v_q$, in the case that they both exist. The resulting MCS is $\MCS_{r+1}$.  By Property~\ref{prop:count}, $v_r^{k,k+1}$ is $1$ if and only if there is an odd number of $(\aug, \aug', H)$-half-disks originating at $\{x_r\} \times [k,k+1]$. There is a bijection between such disks and the $(\aug, \aug', H)$-admissible disks originating at $q$; see Figure~\ref{f:crossingstrands}. Therefore, by Proposition~\ref{prop:admissible-disks}, $v_r^{k,k+1}$ is $1$ if and only if $\MCS$ and $\MCS'$ differ at $q$. Therefore, Property~\ref{prop:count} (a) holds for $\MCS_{r+1}$. Finally, Equations (\ref{eq:crossing}) and (\ref{eq:crossing2}) imply Property~\ref{prop:count} (b) holds for $\MCS_{r+1}$. 

\textbf{Crossing $p_i$ where $1 < i \leq m$:} Suppose $p_i$ is a degree $-1$ crossing between strands $k$ and $k+1$ and $H(p_i) = 1$ holds. Suppose $i \neq k+1$ and $j \neq k$. Half-disks in $\mathcal{H}(x_{r+1}, [i, j])$ cannot have a convex corner at $p_i$ and, therefore, there is a bijection from $\mathcal{H}(x_{r}, [i,  j])$ to $\mathcal{H}(x_{r+1}, [\rho(i), \rho(j)])$; see, for example, Figure~\ref{f:simplecrossing} (c) - (f). Since Property~\ref{prop:count} (b) holds for $\MCS_r$, $\# \mathcal{H}(x_{r+1}, [i, j]) = v_r^{\rho(i), \rho(j)}$ holds. 

\begin{figure}[t]
\labellist
\small\hair 2pt
\pinlabel {(a)} [tl] at 159 143
\pinlabel {(b)} [tl] at 17 100
\pinlabel {(c)} [tl] at 283 100
\pinlabel {(d)} [tl] at 91 32
\pinlabel {(e)} [tl] at 224 32
\endlabellist
\centering
\includegraphics[scale=.9]{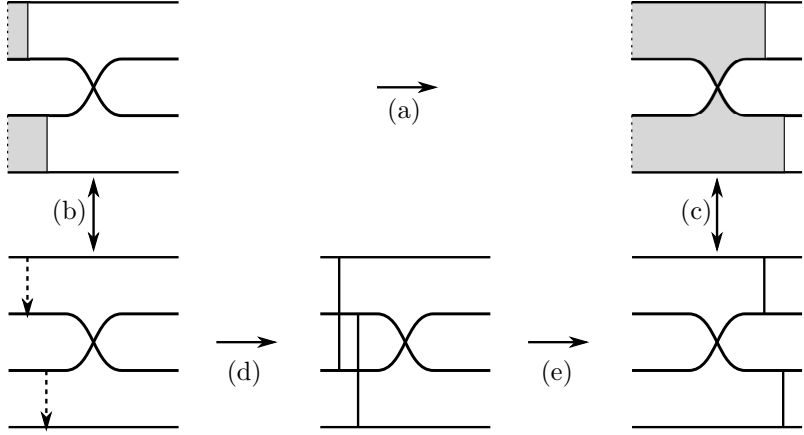}
\caption{The correspondence between $(\aug, \aug', H)$-half-disks with a convex corner at the degree $-1$ crossing in the figure and handleslides introduced by MCS move (13) to the left of the crossing.}
\label{f:crossingcase}
\end{figure}

Note that $\mathcal{H}(x_{r+1}, [k, k+1])$ is empty. Suppose one of $i = k+1$ or $j=k$ hold. Half-disks in $\mathcal{H}(x_{r}, [\rho(i), \rho(j)])$ may be smoothly extended past $p_i$ as in Figure~\ref{f:simplecrossing} (a) and (b). Therefore, there exists an injection from $\mathcal{H}(x_{r}, [\rho(i), \rho(j)])$ to $\mathcal{H}(x_{r+1}, [i, j])$. However, there may be half-disks in $\mathcal{H}(x_{r+1}, [i, j])$ that have a convex corner at $q$. In the case that $j= k$ (resp. $i=k+1$), such disks correspond to $\aug$-half-disks (resp. $\aug'$-half-disks) that have been extended past $p_i$ so as to have a convex corner at $p_i$; see Figure~\ref{f:crossingcase} (a) - (e). Let $(C, d)$ (resp. $(C', d')$) be the chain complex of $\MCS$ (resp. $\MCS'$) just to the left of $p_i$. Property~\ref{prop:count} (a) implies that, in $\MCS_r$, $(C, d)$ (resp. $(C',d')$) is the chain complex to the immediate right (resp. left) of $V_r$.  By Lemma~\ref{l:aug-half-disks}, the mod 2 count of such half-disks is $\langle d e_i, e_j \rangle$ and $\langle d' e_i, e_j \rangle$ respectively. Since Property~\ref{prop:count} (b) holds for $\MCS_r$, we may summarize the work of the previous two paragraphs as follows,
\begin{equation}
\label{eq:crossinghalfdisks1}
\# \mathcal{H}(x_{r+1}, [i, j]) = \left\{ \begin{array}{rl}
 v_r^{\rho(i), \rho(j)} + \langle d e_i, e_j \rangle  &\mbox{ if $j = k$} \\
 v_r^{\rho(i), \rho(j)} + \langle d' e_i, e_j \rangle &\mbox{ if $i = k+1$} \\ 
 0 &\mbox{ if $i=k$ and $j=k+1$} \\
 v_{r}^{\rho(i),\rho(j)}  &\mbox{ otherwise.} 
       \end{array} \right. 
\end{equation}       

\begin{figure}[t]
\labellist
\small\hair 2pt
\pinlabel {(a)} [tl] at 119 217
\pinlabel {(b)} [tl] at 276 217
\pinlabel {(c)} [tl] at 340 150
\pinlabel {(d)} [tl] at 284 56
\pinlabel {(e)} [tl] at 141 56
\pinlabel {$V$} [tl] at 55 145
\pinlabel {$V'$} [tl] at 12 191
\pinlabel {$p_i$} [tl] at 87 225
\endlabellist
\centering
\includegraphics[scale=.9]{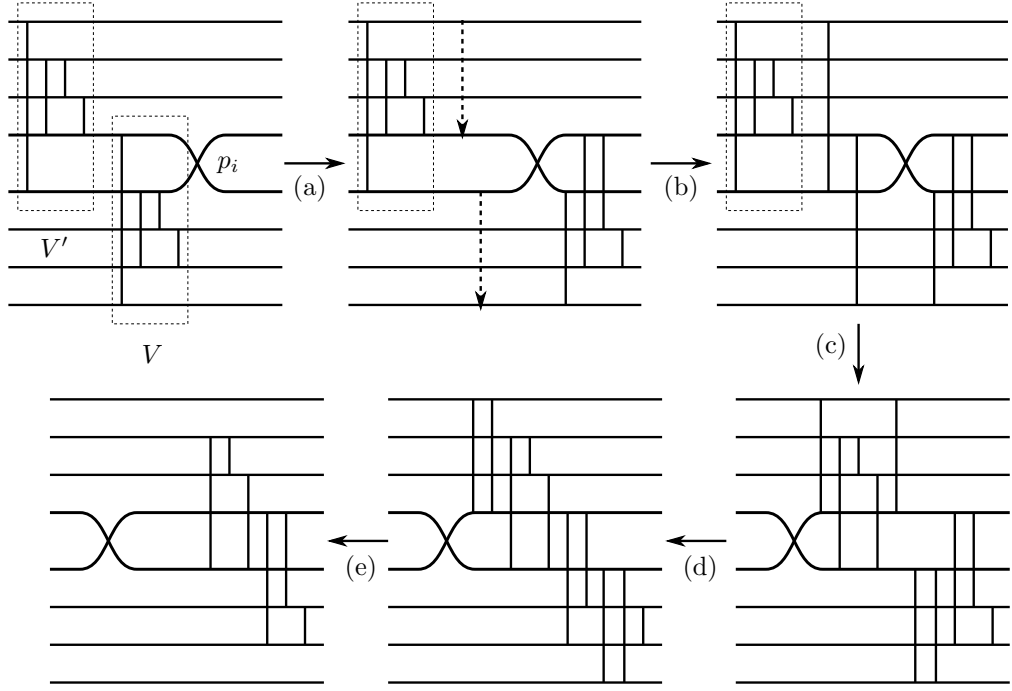}
\caption{The sequence of MCS moves at a crossing $p_i$ where $|p_i| = -1$ and $H(p_i) = 1$ both hold.}
\label{f:negcrossing}
\end{figure}

We now define the sequence of MCS moves that create $\MCS_{r+1}$ from $\MCS_r$ and prove Property~\ref{prop:count} holds for $\MCS_{r+1}$. Let $V \subset V_r$ (resp. $V' \subset V_r$) be the handleslides $v_r^{i,j}$ in $V_r$ satisfying $i \geq k$ (resp. $i < k$). Since $V_r$ is ordered, $V$ is right of $V'$; see Figure~\ref{f:negcrossing}. Let $(\bar{C},  \bar{d})$ be the chain complex of $\MCS_r$ between $V'$ and $V$. Use MCS moves (7) and (9) to move the handleslides in $V$ past $p_i$; see Figure~\ref{f:negcrossing} (a). Since $p_i$ has degree $-1$, $v_r^{k, k+1}$ is $0$ and $\Maslov(k) = \Maslov(k+1)-1$ holds. Therefore, strands $k$ and $k+1$ satisfy the conditions of MCS move (13). Use MCS move (13) to introduce new handleslides between $V'$ and $p_i$; see Figure~\ref{f:negcrossing} (b). MCS move (13) introduces a handleslide with endpoints $i$ and $j$ if and only if either $j = k+1$ and $\langle \bar{d} e_i, e_k \rangle$ is $1$, or $i=k$ and $\langle \bar{d} e_{k+1}, e_j \rangle$ is $1$. Recall that $(C, d)$ (resp. $(C', d')$) is the chain complex of $\MCS_r$ to the immediate right (resp. left) of $V_r$. Since $V_r$ is ordered, the handleslides between $(C, d)$ and $(\bar{C}, \bar{d})$ have upper endpoints on $k, \hdots, s_r$ and the handleslides between $(C', d')$ and $(\bar{C}, \bar{d})$ have upper endpoints on $1, \hdots, k-1$. Because of the ordering of handleslides in $V_r$, the coefficient $\langle \bar{d} e_i, e_k \rangle$ (resp. $\langle \bar{d} e_{k+1}, e_j \rangle$) is unaffected by handleslides in $V$ (resp. $V'$). As a consequence $\langle \bar{d} e_i, e_k \rangle = \langle d e_i, e_k \rangle$ holds for all $i < k$ and $\langle \bar{d} e_{k+1}, e_j \rangle = \langle d' e_{k+1}, e_{j} \rangle$ holds for all $k+1 < j$. Therefore, MCS move (13) introduces a handleslide with endpoints $i$ and $j$ if and only if either $j = k+1$ and $\langle d e_i, e_k \rangle$ is $1$, or $i=k$ and $\langle d' e_{k+1}, e_j \rangle$ is $1$. Move the handleslides created by MCS move (13) and the handleslides in $V'$ past $p_i$ using MCS moves (7) - (9); see Figure~\ref{f:negcrossing} (c). Use MCS moves (1), (3), (5) and (6) to order the collection of handleslides to the right of $p_i$ and remove pairs of handleslides with identical endpoints; see Figure~\ref{f:negcrossing} (d) and (e). In particular, this can be done without creating any new handleslides. The resulting ordered collection of handleslides is $V_{r+1}$ and the MCS is $\MCS_{r+1}$. Since the only new handleslides created were those created by the single application of MCS move (13), 
\begin{equation}
\label{eq:crossinghalfdisks2}
v_{r+1}^{i,j} = \left\{ \begin{array}{rl}
 v_r^{\rho(i), \rho(j)} + \langle d e_i, e_j \rangle  &\mbox{ if $j = k$} \\
 v_r^{\rho(i), \rho(j)} + \langle d' e_i, e_j \rangle &\mbox{ if $i = k+1$} \\ 
 0 &\mbox{ if $i=k$ and $j=k+1$} \\
 v_{r}^{\rho(i),\rho(j)}  &\mbox{ otherwise.} 
       \end{array} \right. 
\end{equation}       
Equations (\ref{eq:crossinghalfdisks1}) and (\ref{eq:crossinghalfdisks2}) imply Property~\ref{prop:count} (b) holds for $\MCS_{r+1}$. Finally, Property~\ref{prop:count} (a) holds for $\MCS_r$ and $|q| \neq 0$ implies $q$ is not marked by either $\MCS_{r+1}$ or $\MCS'$. Therefore, Property~\ref{prop:count} (a) holds for $\MCS_{r+1}$

\textbf{Left cusp:} Suppose $q$ is a left cusp. Number the strands of $\front$, from top to bottom, $1, \hdots, s_{r}$ (resp. $1, \hdots, s_{r+1}$) just to the left (resp. right) of $q$. Define $\tau: \{1,\ldots, s_{r}\} \rightarrow \{1, \ldots, s_{r+1}\}$ by $\tau(i) =\left\{ \begin{array}{cr} i,  & \mbox{if $i<k$} \\ i+2 & \mbox{if $i\geq k$.}  \end{array} \right.$  (Note that $s_{r} = s_{r+1} -2$.) For any $1 \leq i' < j' \leq s_{r}$, there is a bijection between $\mathcal{H}(x_{r+1}, [\tau(i'), \tau(j')])$ and $\mathcal{H}(x_{r}, [i', j'])$; see, for example, Figure~\ref{f:leftcuspcase} (a) and (b). If $\{i, j\} \cap \{k, k+1\}$ is non-empty, then $\mathcal{H}(x_{r+1}, [i, j])$ is empty. Therefore, since Property~\ref{prop:count} (b) holds for $\MCS_{r}$, 
\begin{equation}
\label{eq:leftcuspH}
\# \mathcal{H}(x_{r+1}, [i, j]) = \left\{ \begin{array}{rl}
 v_r^{\tau^{-1}(i), \tau^{-1}(j)} &\mbox{ if $\{i, j \} \cap \{k, k+1\} = \emptyset$} \\
  0 &\mbox{ otherwise.}
       \end{array} \right.
\end{equation}

Use MCS moves (11) and (12) to move each handleslide in $V_r$ past $q$. The resulting collection $V_{r+1}$ is ordered and the resulting MCS is $\MCS_{r+1}$. The endpoints of a handleslide remain on the same strands of $\front$ as it is moved past $q$. Therefore, we have
\begin{equation}
\label{eq:leftcusphandleslide}
v_{r+1}^{i, j} = \left\{ \begin{array}{rl}
 v_r^{\tau^{-1}(i), \tau^{-1}(j)} &\mbox{ if $\{i, j \} \cap \{k, k+1\} = \emptyset$} \\
  0 &\mbox{ otherwise.}
       \end{array} \right.
\end{equation}

Equations (\ref{eq:leftcuspH}) and (\ref{eq:leftcusphandleslide}) imply Property~\ref{prop:count} (b) holds for $\MCS_{r+1}$. Since $q$ is not a crossing and Property~\ref{prop:count} (a) holds for $\MCS_r$, it must hold for $\MCS_{r+1}$ as well. 

\textbf{Right cusp:} Suppose $q$ is a right cusp between strands $k$ and $k+1$. Let $(C, d)$ (resp. $(C', d')$) be the chain complex of $\MCS$ (resp. $\MCS'$) just to the left of $q$. Property~\ref{prop:count} (a) implies that, in $\MCS_r$, $(C, d)$ (resp. $(C',d')$) is the chain complex to the immediate right (resp. left) of $V_r$. Number the strands of $\front$, from top to bottom, $1, \hdots, s_{r+1}$ (resp. $1, \hdots, s_r$) just to the right (resp. left) of $q$. Define $\pi: \{1,\ldots, s_{r+1}\} \rightarrow \{1, \ldots, s_r\}$ by $\pi(i) =\left\{ \begin{array}{cr} i,  & \mbox{if $i<k$} \\ i+2 & \mbox{if $i\geq k$.}  \end{array} \right.$  (Note that $s_{r} = s_{r+1} +2$.) 

\begin{figure}[t]
\labellist
\small\hair 2pt
\pinlabel {(a)} [tl] at 136 277
\pinlabel {(b)} [tl] at 13 244
\pinlabel {(c)} [tl] at 231 244
\pinlabel {(d)} [tl] at 76 186
\pinlabel {(e)} [tl] at 189 186
\pinlabel {(f)} [tl] at 76 106
\pinlabel {(g)} [tl] at 189 106
\pinlabel {(h)} [tl] at 76 42
\pinlabel {(i)} [tl] at 190 42
\pinlabel {$l$} [tr] at 0 225
\pinlabel {$k$} [tr] at 0 209
\pinlabel {$k+1$} [tr] at 0 184
\pinlabel {$j$} [tr] at 0 169
\pinlabel {$h$} [tr] at 134 233
\endlabellist
\centering
\includegraphics[scale=.9]{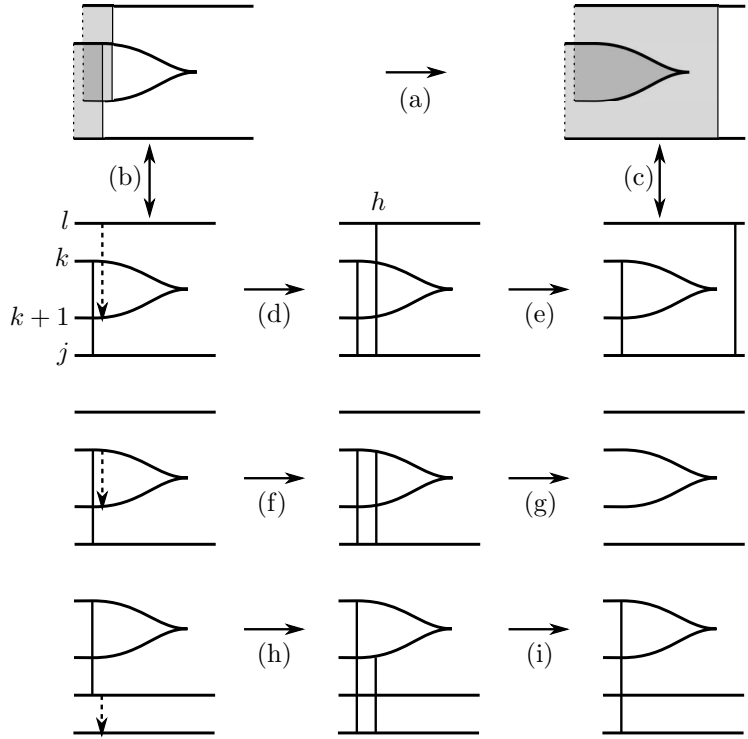}
\caption{(a) - (e): The correspondence between Type 1 $(\aug, \aug', H)$-half-disks whose boundary intersects the right cusp in the figure and handleslides introduced by MCS move (13) to the left of the crossing. (f), (g): A handleslide introduced by MCS move (13) that is removed, along with $v_r^{k,j}$, by an MCS (1) move. (h), (i): A handleslide introduced by MCS move (13) that is removed by an MCS (10) move.}
\label{f:Type1}
\end{figure}

\begin{figure}[t]
\labellist
\small\hair 2pt
\pinlabel {(a)} [tl] at 136 119
\pinlabel {(b)} [tl] at 13 86
\pinlabel {(c)} [tl] at 231 86
\pinlabel {(d)} [tl] at 76 27
\pinlabel {(e)} [tl] at 189 27
\pinlabel {$i$} [tr] at 0 64
\pinlabel {$k$} [tr] at 0 46
\pinlabel {$k+1$} [tr] at 0 23
\pinlabel {$l$} [tr] at 0 6
\pinlabel {$h$} [tr] at 134 72
\endlabellist
\centering
\includegraphics[scale=.9]{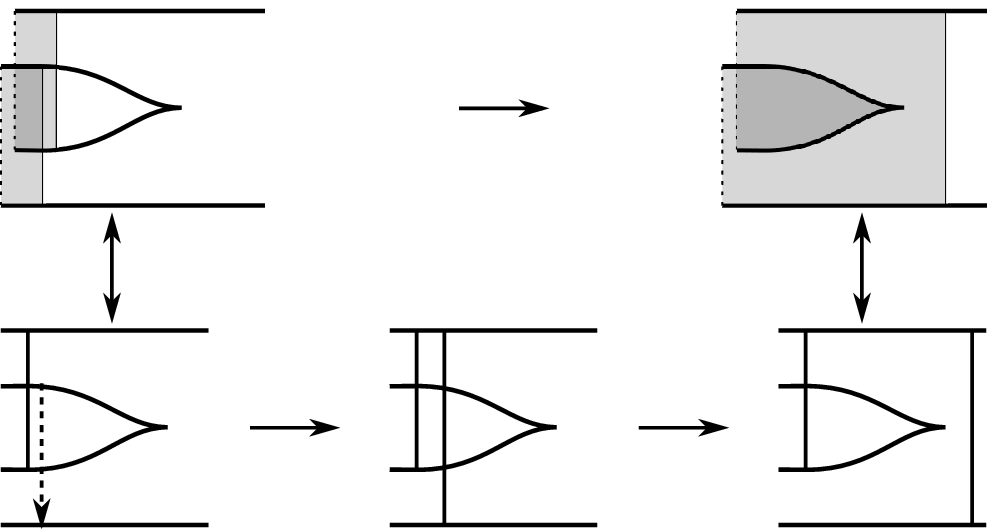}
\caption{(a) - (e): The correspondence between Type 2 $(\aug, \aug', H)$-half-disks whose boundary intersects the right cusp in the figure and handleslides introduced by MCS move (13) to the left of the crossing.}
\label{f:Type2}
\end{figure}

If $j < k$ or $i \geq k$, then 
\begin{equation}
\label{eq:Type0}
\# \mathcal{H}(x_{r+1}, [i, j]) = v_r^{\pi(i),\pi(j)}
\end{equation}
holds, since Property~\ref{prop:count} (b) holds for $\MCS_r$ and there is a bijection from $\mathcal{H}(x_{r}, [\pi(i), \pi(j)])$ to $\mathcal{H}(x_{r+1}, [i, j])$; see Figure~\ref{f:cuspcase} (a) and (b). 

When $j \geq k$ and $i < k$, we claim that there is a bijection 
\begin{align}
\label{eq:right-cusp}
\mathcal{H}(x_{r+1}, [i, j]) &\cong \mathcal{H}(x_{r}, [\pi(i), \pi(j)]) \\ \nonumber
&\cup \left(\mathcal{G}^\epsilon(x_{r}, [\pi(i), k+1]) \times \mathcal{H}(x_r, [k,\pi(j)]) \right) \\ \nonumber
&\cup \left(\mathcal{H}(x_{r}, [\pi(i), k+1]) \times \mathcal{G}^{\epsilon'}(x_r, [k,\pi(j)]) \right).
\end{align}
Suppose $j \geq k$ and $i < k$. Half-disks in $\mathcal{H}(x_{r}, [\pi(i), \pi(j)])$ may be smoothly extended past $q$ as in Figure~\ref{f:cuspcase} (c). Therefore, there exists an injection from $\mathcal{H}(x_{r}, [\pi(i), \pi(j)])$ to $\mathcal{H}(x_{r+1}, [i, j])$. However, there may be half-disks in $\mathcal{H}(x_{r+1}, [i, j])$ whose boundary intersects the cusp point; see Figure~\ref{f:Type1} (a) and Figure~\ref{f:Type2} (a). 

We divide half-disks whose boundary intersects $q$ into two types as follows.  Any such half-disk has one convex corner at a degree $-1$ crossing, which we denote $p$. Trace the boundary of such a half-disk counter-clockwise beginning at the vertical line $\{x_{r+1}\}\times [i,j]$. In a Type 1 (resp. Type 2) half-disk, $p$ appears after (resp. before) $q$. A Type 1 (resp. Type 2) half-disk can be uniquely decomposed into an $(\aug, \aug', H)$-half-disk and an $\aug$-half-disk (resp. $\aug'$-half-disk) as in Figure~\ref{f:Type1} (a) and (b) (resp. Figure~\ref{f:Type2} (a) and (b)). Therefore, the set in the second (resp. third) line of Equation (\ref{eq:right-cusp}) is in bijection with Type 1 (resp. Type 2) half-disks. Since Property~\ref{prop:count} (b) holds for $\MCS_r$ and Lemma~\ref{l:aug-half-disks} holds for both $(C, d)$ and $(C',d')$, the mod 2 count of Type 1 half-disks is $\langle d e_{\pi(i)}, e_{k+1} \rangle \cdot v_r^{k, \pi(j)}$ and the mod 2 count of Type 2 half-disks is $v_r^{\pi(i),k+1} \cdot \langle d' e_k, e_{\pi(j)} \rangle$. Therefore, for $j \geq k$ and $i < k$, we have the formula
\begin{align}
\label{eq:Type12}
\# \mathcal{H}(x_{r+1}, [i, j]) &=  v_r^{\pi(i),\pi(j)} + v_r^{\pi(i),k+1} \cdot \langle d' e_k, e_{\pi(j)} \rangle \\ \nonumber
&+  \langle d e_{\pi(i)}, e_{k+1} \rangle \cdot v_r^{k, \pi(j)}.
\end{align}

We now define the sequence of MCS moves that create $\MCS_{r+1}$ from $\MCS_r$ and prove Property~\ref{prop:count} holds for $\MCS_{r+1}$. We move the handleslides of $V_r$ past $q$ iteratively beginning with the right-most handleslide. Suppose $v_r^{i,j}$ is the right-most handleslide of $V_r$ that has yet to be moved past $q$. If $i > k+1$ or $j < k$, use MCS move (12) to move $v_r^{i,j}$ past $q$. If $i<k$ and $j> k+1$, use MCS move (11) to move $v_r^{i,j}$ past $q$. If $i = k+1$ or $j=k$, use MCS move (10) to remove $v_r^{i,j}$. Since $\Maslov(k) = \Maslov(k+1) + 1$ and a handleslide has endpoints on strands with the same Maslov potential, $v_r^{k, k+1}$ must be $0$. It remains to consider the two cases $i=k, j>k+1$ and $i<k, j=k+1$. 

Suppose $v_r^{i,j}$ is $v_r^{k,j}$ where $j > k+1$. Since $\Maslov(k) = \Maslov(j)$ and $\Maslov(k) = \Maslov(k+1) + 1$ both hold, $\Maslov(k+1) = \Maslov(j) - 1$ holds and, thus, strands $k+1$ and $j$ satisfy the conditions of MCS move (13). Use MCS move (13) to create new handleslide marks; see the arrow directed to the right in Figure~\ref{f:explosion}. Let $(\bar{C}, \bar{d})$ be the chain complex of $\MCS_r$ just to the right of $v_r^{k,j}$. The handleslides created are of three types. By Definition~\ref{defn:MCS} (4), $\langle \bar{d} e_{k}, e_{k+1} \rangle$ is $1$. Therefore, MCS move (13) introduces a handleslide with endpoints $k$ and $j$; see Figure~\ref{f:Type1} (f). Use MCS move (1) to remove this handleslide and $v_r^{k, j}$; see Figure~\ref{f:Type1} (g). For each $l$ such that $\langle \bar{d} e_j, e_l \rangle$ is $1$, MCS move (13) introduces a handleslide with endpoints $k+1$ and $l$; see Figure~\ref{f:Type1} (h). Use MCS move (10) to remove this handleslide; see Figure~\ref{f:Type1} (i). Suppose $l < k$ and $\langle \bar{d} e_l, e_{k+1} \rangle$ is $1$. The third type of handleslide introduced by MCS move (13) has endpoints $l$ and $j$; see Figure~\ref{f:Type1} (d). Let $h$ be this handleslide. Use MCS move (11) to move $h$ past $q$; see Figure~\ref{f:Type1} (e). Recall that $(C, d)$ is the chain complex of $\MCS_r$ to the immediate right of $V_r$. Since $V_r$ is ordered, the handleslides between $(\bar{C}, \bar{d})$ and $(C, d)$ have endpoints on strands $k+1, \hdots, s_p$. The coefficient $\langle \bar{d} e_l, e_{k+1} \rangle$ is unaffected by such handleslides and, thus, $\langle \bar{d} e_l, e_{k+1} \rangle = \langle d e_l, e_{k+1} \rangle$ holds. Therefore, $h$ exists if and only if $\langle d e_{l}, e_{k+1} \rangle \cdot v_r^{k, j}$ is $1$. As we noted earlier, $\langle d e_{l}, e_{k+1} \rangle \cdot v_r^{k, j}$ is $1$ if and only if the mod 2 count of Type 1 half-disks in $\mathcal{H}(x_{r+1}, [l, j+2])$ is $1$. Therefore, $h$ exists if and only if the mod 2 count of Type 1 half-disks in $\mathcal{H}(x_{r+1}, [l, j+2])$ is $1$.

Suppose $v_r^{i,j}$ is $v_r^{i, k+1}$ where $i < k$. Note that $\Maslov(i) = \Maslov(k) - 1$ holds and, thus, strands $i$ and $k$ satisfy the conditions of MCS move (13). Use MCS move (13) to create new handleslides to the immediate right of $v_r^{i, k+1}$. Suppose $l > k+1$ and $\langle \bar{d} e_k, e_{l} \rangle$ is $1$. MCS move (13) introduces a handleslide with endpoints $i$ and $l$; see Figure~\ref{f:Type2} (d). Let $h$ be this handleslide. Use MCS move (11) to move $h$ past $q$; see Figure~\ref{f:Type2} (e). Following an analogous argument as was used in the case of a Type 1 half-disk, $h$ exists if and only if the mod 2 count of Type 2 half-disks in $\mathcal{H}(x_{r+1}, [i, l+2])$ is $1$.
MCS move (13) also introduces handleslides analogous to those in Figure~\ref{f:Type1} (f) and (h), which are removed in same manner as was done in Figure~\ref{f:Type1} (g) and (i). 

Once we have applied the above algorithm to each handleslide in $V_r$, we are left with a collection of handleslides $V$ to the right of $q$. The ordering of $V_r$ ensures the only new handleslides were those introduced by applications of MCS move (13). Therefore, given $1 \leq i < j \leq s_{r+1}$, there may be up to 3 handleslides in $V$ with endpoints on $i$ and $j$; one counts $(\aug, \aug', H)$-half-disks extended past $q$ as in Figure~\ref{f:cuspcase}, one counts Type 1 half-disks as in Figure~\ref{f:Type1} (a) - (e), and the third counts Type 2 half-disks as in Figure~\ref{f:Type2} (a) - (e). Use MCS moves (1), (3), (5), and (6) to remove pairs of handleslides with identical endpoints and order $V$. In particular, $V$ can be ordered without creating new handleslides. The resulting ordered collection of handleslides is $V_r$ and the MCS is $\MCS_{r+1}$. If $j < k$ or $i \geq k$, then $$v_{r+1}^{i,j} = v_r^{\pi(i),\pi(j)}$$ holds and, if $j \geq k$ and $i < k$, then
$$v_{r+1}^{i,j} =  v_r^{\pi(i),\pi(j)} + v_r^{\pi(i),k+1} \cdot \langle d' e_k, e_{\pi(j)} \rangle  +  \langle d e_{\pi(i)}, e_{k+1} \rangle \cdot v_r^{k, \pi(j)}$$ holds. These equations, along with Equations (\ref{eq:Type0}) and (\ref{eq:Type12}), imply Property~\ref{prop:count} (b) holds for $\MCS_{r+1}$. Finally, since $q$ is not a crossing and Property~\ref{prop:count} (a) holds for $\MCS_r$, it must hold for $\MCS_{r+1}$ as well. 

This completes the construction of the MCSs $\MCS_0, \ldots, \MCS_s$.  
\end{proof}

In the following corollaries, $\front$ is the $\sigma$-generic front diagram of a Legendrian knot with rotation number $0$. Recall that an augmentation $\aug$ in $\mbox{Aug}(\front)$ has a corresponding A-form MCS $\MCS$ where, for a degree $0$ crossing $q$, $\aug(q) = 1$ holds if and only if $q$ is marked by $\MCS$. 

\begin{corollary}
\label{c:MCS-invt}
The count of MCS classes of a Legendrian knot is a Legendrian isotopy invariant. 
\end{corollary}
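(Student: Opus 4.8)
The plan is to deduce this directly from Theorem~\ref{t:bijection} together with the already-known invariance of the count of homotopy classes of augmentations. First I would fix a Legendrian knot $\Leg$ of rotation number $0$ and, after an arbitrarily small Legendrian isotopy, choose a $\sigma$-generic front diagram $\front$ for $\Leg$. Since $\alg(\front)$ has finitely many generators and augmentations take values in $\Z/2\Z$, the set $\mbox{Aug}(\front)$ is finite, so $\mbox{Aug}^{ch}(\front) = \mbox{Aug}(\front)/\simeq$ is finite and $|\mbox{Aug}^{ch}(\front)|$ is a well-defined non-negative integer. By Theorem~\ref{t:bijection} there is a bijection between $\widehat{\mbox{MCS}}(\front)$ and $\mbox{Aug}^{ch}(\front)$, hence $|\widehat{\mbox{MCS}}(\front)| = |\mbox{Aug}^{ch}(\front)|$; in particular $\widehat{\mbox{MCS}}(\front)$ is finite and its cardinality is the quantity we wish to show is invariant.

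Next I would invoke Proposition~4.5 of \cite{Henry2011}, which states that $|\mbox{Aug}^{ch}(\front)|$ depends only on the Legendrian isotopy class of $\Leg$ and not on the particular $\sigma$-generic front diagram chosen. Combining this with the equality of cardinalities from the previous step, the number $|\widehat{\mbox{MCS}}(\front)|$ likewise depends only on the Legendrian isotopy class of $\Leg$. Declaring the count of MCS classes of a Legendrian knot to be $|\widehat{\mbox{MCS}}(\front)|$ for any $\sigma$-generic front $\front$ of that knot, this count is well-defined and a Legendrian isotopy invariant, which is the assertion.

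I do not expect a genuine obstacle here: all of the substantive work lies in Theorem~\ref{t:bijection}, hence in Lemma~\ref{l:equiv} and the cited results of \cite{Henry2011}, and the only extra input is the invariance of the augmentation count. The one point worth a remark is the choice of equivalence relation on MCSs in the statement: by Proposition~3.17 of \cite{Henry2011} the inclusion $\mbox{MCS}_b(\front) \subset \mbox{MCS}(\front)$ induces a bijection $\widehat{\mbox{MCS}}_b(\front) \to \widehat{\mbox{MCS}}(\front)$, so the two counts coincide and it is immaterial which is used. Finally, I would note that the hypothesis of rotation number $0$ is carried throughout this section, consistent with the convention stated just before the corollary.
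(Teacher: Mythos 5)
Your proposal is correct and follows exactly the paper's own route: the paper deduces the corollary from Theorem~\ref{t:bijection} together with the invariance of the count of augmentation homotopy classes (Proposition~4.5 of \cite{Henry2011}) and the fact that every Legendrian knot admits a $\sigma$-generic front after an arbitrarily small isotopy. Your added remarks on finiteness and on the equivalence of $\widehat{\mbox{MCS}}_b(\front)$ with $\widehat{\mbox{MCS}}(\front)$ are fine but not needed beyond what the paper already records.
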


Corollary~\ref{c:MCS-invt} follows from the fact that the count of homotopy classes of augmentations is a Legendrian isotopy invariant and every Legendrian knot is Legendrian isotopic to a Legendrian knot with $\sigma$-generic front diagram by an arbitrarily small Legendrian isotopy. Corollary~\ref{c:MCS-invt} is stated and a proof is briefly sketched by Petya Pushkar in a letter to Dmitry Fuchs in 2000. The proposed proof investigates the effect of Legendrian Reidemeister moves on the number of MCS classes and is different from the approach in this article. 

Given the Chekanov-Eliashberg algebra $(\alg(\front), \df)$, the differential $\df^{\aug} : \alg(\front) \to \alg(\front)$ is $\phi^{\aug} \circ \df \circ (\phi^{\aug})^{-1}$ where $\phi^{\aug} : \alg(\front) \to \alg(\front)$ is the algebra map defined on generators by $\phi^{\aug}(q) = q + \aug(q)$. The group $\mbox{LCH}(\aug)$, called the \textbf{linearized contact homology} of $\aug$, is the homology of the chain complex $(A(\front), \df^{\aug}_1)$ where $\df^{\aug}_1(q)$ is the length 1 monomials of $\df^{\aug}(q)$. By \cite{Chekanov2002}, the set $\{ \mbox{LCH}(\aug) \}_{\aug \in \mbox{Aug}(\front)}$ is a Legendrian isotopy invariant, which we will call the \textbf{LCH invariant}. 

\begin{corollary}
\label{c:LCH}
If $\aug$ and $\aug'$ are homotopic as augmentations, then $\mbox{LCH}(\aug)$ and $\mbox{LCH}(\aug')$ are isomorphic as homology groups. Therefore, augmentation homotopy classes have well-defined linearized contact homology groups. 
\end{corollary}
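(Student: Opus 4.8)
The plan is to reduce the statement to the already-established bijection of Theorem~\ref{t:bijection} together with the invariance results cited earlier in the paper, rather than attempting a direct chain-level computation comparing $\df^{\aug}_1$ and $\df^{\aug'}_1$. First, I would recall that by Theorem~\ref{t:bijection} the homotopy class $[\aug]$ corresponds to a single MCS equivalence class, say $[\MCS_{\aug}]$, and likewise $[\aug']$ corresponds to $[\MCS_{\aug'}]$. Since $\aug\simeq\aug'$, these MCS equivalence classes coincide: $[\MCS_{\aug}]=[\MCS_{\aug'}]$. In particular, there is a finite sequence of MCS moves connecting an A-form representative $\MCS_{\aug}$ to an A-form representative $\MCS_{\aug'}$.

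Next I would invoke the comparison between MCSs and linearized contact homology from \cite{Henry2013}. Each MCS $\MCS$ has an associated homology group — the homology of the total complex built from its chain complexes $(C_m,d_m)$ — and the key technical input is that this homology is invariant under the MCS moves of Figures~\ref{f:moves} and \ref{f:explosion}; i.e., it depends only on the equivalence class $[\MCS]\in\widehat{\mbox{MCS}}_b(\front)$. Moreover, for an A-form MCS $\MCS$ with associated augmentation $\aug_{\MCS}$, this MCS homology is isomorphic to $\mbox{LCH}(\aug_{\MCS})$. Granting these two facts, the chain of isomorphisms $\mbox{LCH}(\aug)\cong H(\MCS_{\aug})\cong H(\MCS_{\aug'})\cong\mbox{LCH}(\aug')$ follows: the outer isomorphisms come from the A-form-to-LCH identification, and the middle one from invariance under MCS moves applied along the sequence connecting the two representatives.

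The main obstacle is supplying, or precisely citing, the two facts in the previous paragraph in a form matching the present setup: (i) that the MCS move equivalence relation of Definition~\ref{defn:equiv} preserves the homology of the associated total complex, and (ii) that this homology agrees with linearized contact homology for A-form MCSs. Both appear in \cite{Henry2013}, but one must check that the definitions there (possibly stated for nearly plat front diagrams, or with slightly different conventions for handleslides and births) agree with the definitions used here; Lemma~\ref{l:aug-half-disks} above is exactly the sort of lemma needed to remove the nearly-plat hypothesis, so I would point to it as the bridge. Once the bookkeeping is aligned, the argument is essentially formal. Finally, the last sentence of the corollary — that augmentation homotopy classes have well-defined LCH groups — is then immediate: $\mbox{LCH}(\aug)$ depends only on $[\aug]\in\mbox{Aug}^{ch}(\front)$ since homotopic augmentations produce isomorphic groups.
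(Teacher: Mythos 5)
Your argument for the case of a nearly plat front diagram is essentially the paper's: Lemma~\ref{l:equiv} gives that the A-form MCSs associated to $\aug$ and $\aug'$ are equivalent, and then one cites \cite{Henry2013} for the two facts you isolate --- that the linearized complex attached to an MCS is invariant under MCS moves (Theorem 5.5 there) and that for an A-form MCS it is isomorphic to $(A(\front),\df^{\aug}_1)$ (Theorem 7.3 there). So far this matches the paper.

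The gap is in how you propose to handle front diagrams that are not nearly plat. You suggest that Lemma~\ref{l:aug-half-disks} is ``exactly the sort of lemma needed to remove the nearly-plat hypothesis,'' but that lemma only identifies the differentials $d_p$ of an A-form MCS with counts of $\aug_{\MCS}$-half-disks; it says nothing about the DGA $(\alg_{\MCS},d)$ of \cite{Henry2013} or about Theorems 5.5 and 7.3 there, whose constructions genuinely use the nearly plat hypothesis. Extending those theorems to arbitrary $\sigma$-generic fronts would be a separate (and nontrivial) project. The paper instead sidesteps this entirely with an algebraic reduction: by \cite{Chekanov2002a} the Chekanov--Eliashberg DGA of any front is stable tame isomorphic to that of a nearly plat front, and one then checks directly that the property asserted in the corollary is preserved under (i) stabilization --- restriction of augmentations from $S(\alg)$ to $\alg$ induces a bijection on homotopy classes and an isomorphism on linearized homology --- and (ii) DGA isomorphism, via pullback of augmentations. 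You would need to either supply this stable-tame-isomorphism argument or genuinely extend the \cite{Henry2013} results; as written, the bridge you point to does not carry the weight.
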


\begin{proof}
We will apply two theorems from \cite{Henry2011}. In order to do so, the front diagram must be ``nearly plat''. A front diagram is \textbf{plat} if all left cusps have the same $x$-coordinate, all right cusps have the same $x$-coordinate, and no two crossings have the same $x$-coordinate. A front diagram is \textbf{nearly plat} if it is the result of perturbing a plat front diagram slightly so that no two cusps have the same $x$-coordinate. 

We now deduce the corollary in the case that  $\front$ is nearly plat. Suppose $\aug$ and $\aug'$ are homotopic. By Lemma~\ref{l:equiv}, the A-form MCSs $\MCS$ and $\MCS'$ corresponding to $\aug$ and $\aug'$ are equivalent as MCSs. In \cite{Henry2013}, differential graded algebras $(\alg_{\MCS}, d)$ and $(\alg_{\MCS'}, d')$ are assigned to $\MCS$ and $\MCS'$, respectively. The linear level of each algebra is a chain complex $(A_{\MCS}, d_1)$ and $(A_{\MCS'}, d'_1)$, respectively. By Theorem 5.5 of \cite{Henry2013}, $(A_{\MCS}, d_1)$ and $(A_{\MCS'}, d'_1)$ are isomorphic. By Theorem 7.3 of \cite{Henry2013}, $(A(\front), \df^{\aug}_1)$ is isomorphic to $(A_{\MCS}, d_1)$ and  $(A(\front), \df^{\aug'}_1)$ is isomorphic to $(A_{\MCS'}, d'_1)$. Therefore, $\mbox{LCH}(\aug)$ and $\mbox{LCH}(\aug')$ are isomorphic as homology groups. 

For the general case of a Chekanov-Eliashberg algebra $(\alg, \df)$ assigned to a front (or Lagrangian) diagram that is not nearly plat, we argue as follows. By \cite{Chekanov2002a}, Chekanov-Eliashberg algebras assigned to Legendrian isotopic Legendrian knots are stable tame isomorphic. Any Legendrian knot is Legendrian isotopic to a knot with nearly plat front diagram, therefore $(\alg, \df)$ is stable tame isomorphic to a DGA that satisfies the property stated in Corollary \ref{c:LCH}. We then verify that $(\alg, \df)$ also satisfies the corollary in two steps.

\begin{enumerate}
\item[Step 1.]  The corollary holds for a stabilization $(S(\alg), \df')$ of a DGA $(\alg, \df)$ if and only if it holds for $(\alg, \df)$. 

Here, $S(\alg)$ is obtained from $\alg$ by adding two generators $x$ and $y$ in successive degrees, and the differential satisfies $\df'|_{\alg} = \df$ and $\df'x = y$.  Restricting augmentations of $S(\alg)$ to $\alg$ provides a surjection from the set of augmentations of $S(\alg)$ to the set of augmentations of $\alg$, and this gives a well-defined bijection between homotopy classes of augmentations of $S(\alg)$ and $\alg$.  Moreover,  for any augmentation $\aug : S(\alg) \rightarrow \Z/2$, the linearized homology groups associated to $\aug$ and $\aug|_{\alg}$ are isomorphic, so Step 1 follows.   

\item[Step 2.] If $\varphi: (\alg_1, \df_1)\rightarrow (\alg_2, \df_2)$  is an isomorphism of DGAs,  then the corollary holds for $(\alg_1, \df_1)$ if and only if it holds for $(\alg_2, \df_2)$.

To see this, observe that $\epsilon_2 \mapsto \epsilon_1 \circ \varphi$ gives a bijection from augmentations of $(\alg_2, \df_2)$ to augmentations of $(\alg_1, \df_1)$ that preserves homotopy classes and linearized homology groups. 
\end{enumerate}

\end{proof}

Corollary~\ref{c:LCH} provides a means for strengthening the LCH invariant. The set $\{ \mbox{LCH}(\aug) \}_{\aug \in \mbox{Aug}(\front)}$, along with a count of the number of augmentation homotopy classes associated with each group, is a Legendrian isotopy invariant. The authors are currently unaware of an example where this refinement is able to distinguish knots that are not already distinguished by the LCH invariant taken without regard to multiplicity.

\begin{corollary}
If $\aug$ and $\aug'$ are homotopic, then $\aug$ and $\aug'$ are mapped to the same graded normal ruling by the many-to-one map from augmentations to graded normal rulings defined in \cite{Ng2006}.
\end{corollary}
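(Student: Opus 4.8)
The plan is to transport the statement to Morse complex sequences, where Theorem~\ref{t:bijection} and its proof supply most of what is needed. Using the bijection $\Phi\colon\mbox{MCS}_A(\front)\to\mbox{Aug}(\front)$, write $\aug=\aug_{\MCS}$ and $\aug'=\aug_{\MCS'}$ for A-form MCSs $\MCS,\MCS'\in\mbox{MCS}_A(\front)$. Since $\aug\simeq\aug'$, Lemma~\ref{l:equiv} produces a finite sequence of MCS moves in $\mbox{MCS}_b(\front)$ carrying $\MCS$ to $\MCS'$; that is, $\MCS\simeq\MCS'$. Thus the problem reduces to showing that MCS-equivalent A-form MCSs are assigned the same graded normal ruling by $\cite{Ng2006}$.

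For this I would invoke two facts about the relationship between MCSs and rulings established in \cite{Henry2011}. First, every MCS in $\mbox{MCS}_b(\front)$ determines a graded normal ruling of $\front$, obtained by sweeping the front from left to right and deciding at each degree~$0$ crossing whether it is a switch according to the behaviour of the chain complexes $(C_m,d_m)$ near it; moreover each of the MCS moves of Figures~\ref{f:moves} and \ref{f:explosion}, including the explosion move~(13), leaves this ruling unchanged, so $\MCS$ and $\MCS'$ determine one and the same ruling $\rho$. Second, for an A-form MCS $\MCS$ this ruling coincides with the image of $\aug_{\MCS}$ under the augmentation-to-ruling map of \cite{Ng2006}. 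Applying this compatibility to $\MCS$ and to $\MCS'$ shows that $\aug$ and $\aug'$ are both sent to $\rho$, which is the assertion.

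The step that demands the most care is the compatibility of the two ruling constructions on A-form MCSs: the combinatorial sweep of the MCS differentials on the one hand, and the disk-counting recipe of \cite{Ng2006} applied to $(\alg(\front),\df)$ on the other. If this identification is not directly available in the cited form, I would reprove it crossing by crossing using the mechanism of Lemma~\ref{l:aug-half-disks}: the coefficients $\langle d_p e_i,e_j\rangle$ of an A-form MCS are computed by $\aug_{\MCS}$-half-disks, and these are exactly the geometric objects that detect switches in the construction of \cite{Ng2006}, so the two sweeps agree across each crossing and cusp. Finally, should the map of \cite{Ng2006} be set up only for nearly plat front diagrams, one first reduces to that case exactly as in the proof of Corollary~\ref{c:LCH}, using that the homotopy class of an augmentation and the associated ruling are both compatible with the stable tame isomorphisms arising from Legendrian isotopy.
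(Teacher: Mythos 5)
Your proposal follows exactly the paper's argument: reduce to MCS equivalence via Lemma~\ref{l:equiv}, use the facts (Lemma 3.14 and Proposition 3.15 of \cite{Henry2011}) that every MCS determines a graded normal ruling and that equivalent MCSs determine the same one, and then invoke the compatibility of the A-form MCS ruling with the map of \cite{Ng2006}. The paper likewise simply cites this last compatibility rather than reproving it, so your contingency plans are unnecessary but harmless.
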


\begin{proof}
Suppose $\aug$ and $\aug'$ are homotopic. By Lemma~\ref{l:equiv}, the A-form MCSs $\MCS$ and $\MCS'$ corresponding to $\aug$ and $\aug'$ are equivalent. By Lemma 3.14 of \cite{Henry2011}, every MCS determines a graded normal ruling. By Proposition 3.15 of \cite{Henry2011}, equivalent MCSs determine the same graded normal ruling. Therefore, $\MCS$ and $\MCS'$ determine the same graded normal ruling. In \cite{Ng2006}, there is an algorithmically defined many-to-one map $\Omega$ from $\mbox{Aug}(\front)$ to the set of graded normal rulings of $\front$. In the case of an augmentation $\aug$ and its corresponding A-form MCS $\MCS$, $\Omega(\aug)$ is the same as the graded normal ruling determined by $\MCS$ in Lemma 3.14 of \cite{Henry2011}. Therefore, $\Omega$ maps $\aug$ and $\aug'$ to the same graded normal ruling. 
\end{proof}

\addcontentsline{toc}{section}{Bibliography} 
\bibliographystyle{amsplain}
\bibliography{Bibliography}

\end{document}